\theoremstyle{plain}
\newtheorem{theorem}{Theorem}[section]
\newtheorem{question}[theorem]{Question}
\newtheorem{problem}[theorem]{Problem}
\newtheorem{proposition}[theorem]{Proposition}
\newtheorem{lemma}[theorem]{Lemma}
\theoremstyle{definition}
\newtheorem{example}[theorem]{Example}
\newtheorem{remark}[theorem]{Remark}
\newcommand{\nc}{\newcommand}
\nc{\dmo}{\DeclareMathOperator}
\nc{\Q}{\mathbb{Q}}
\nc{\F}{\mathbb{F}}
\nc{\R}{\mathbb{R}}
\nc{\Z}{\mathbb{Z}}
\nc{\C}{\mathbb{C}}
\nc{\Ell}{\mathcal{L}}
\nc{\M}{\mathcal{M}}
\nc{\K}{\mathcal{K}}
\nc{\disk}{\mathbb{D}}
\nc{\hyp}{\mathbb{H}}
\nc{\CP}{\mathbb{CP}}
\nc{\cS}{\mathcal{S}}
\dmo{\Mod}{Mod}
\dmo{\Diff}{Diff}
\dmo{\Homeo}{Homeo}
\dmo{\dist}{dist}
\dmo\BDiff{BDiff}
\dmo\SO{SO}
\dmo\Hom{Hom}
\dmo\SL{SL}
\dmo\Sp{Sp}
\dmo\rank{rank}
\dmo\sig{sig}
\dmo\Out{Out}
\dmo\Aut{Aut}
\dmo\Inn{Inn}
\dmo\GL{GL}
\dmo\PSL{PSL}
\dmo\tr{tr}
\dmo\BHomeo{BHomeo}
\dmo\EHomeo{EHomeo}
\dmo\EDiff{EDiff}
\nc\Sig{\Sigma}
\dmo\Teich{Teich}
\dmo\Fix{Fix}
\nc{\pair}[1]{\langle #1 \rangle}
\nc{\abs}[1]{\left| #1 \right|}
\nc{\action}{\circlearrowright}
\nc{\norm}[1]{\left | \left | #1 \right | \right |}
\nc{\abcd}[4]{\left(\begin{array}{cc} #1 & #2 \\ #3 & #4 \end{array}\right)}
\dmo{\Isom}{Isom}
\nc{\normal}{\vartriangleleft}
\dmo{\Vol}{Vol}
\dmo{\im}{Im}
\dmo{\Push}{Push}
\dmo{\Conf}{Conf}
\dmo{\PConf}{PConf}
\dmo{\id}{id}
\renewcommand{\epsilon}{\varepsilon}
\nc{\coloneq}{\mathrel{\mathop:}\mkern-1.2mu=}
\nc{\margin}[1]{\marginpar{\scriptsize #1}}
\nc{\para}[1]{\medskip\noindent\textbf{#1.}}
\newenvironment{packed_enum}{
\begin{enumerate}
  \setlength{\itemsep}{0pt}
  \setlength{\parskip}{0pt}
  \setlength{\parsep}{0pt}
}{\end{enumerate}}
\title{Surface bundles over surfaces with arbitrarily many fiberings}
\author{Nick Salter}
\email{nks@math.uchicago.edu}
\date{\today}
\address{Department of Mathematics\\ University of Chicago\\ 5734 S. University Ave., Chicago, IL 60637}
\begin{document}
\maketitle
\begin{abstract}
In this paper we give the first example of a surface bundle over a surface with at least three fiberings. In fact, for each $n \ge 3$ we construct $4$-manifolds $E$ admitting at least $n$ distinct fiberings $p_i: E \to \Sigma_{g_i}$ as a surface bundle over a surface with base and fiber both closed surfaces of negative Euler characteristic. We give examples of surface bundles admitting multiple fiberings for which the monodromy representation has image in the Torelli group, showing the necessity of all of the assumptions made in the main theorem of our recent paper \cite{salter}. Our examples show that the number of surface bundle structures that can be realized on a $4$-manifold $E$ with Euler characteristic $d$ grows exponentially with $d$.

\end{abstract}

\section{Introduction}
Let $M^3$ be a $3$-manifold fibering over $S^1$ with fiber $\Sigma_g\ (g \ge 2)$. If $b_1(M) \ge 2$, Thurston\footnote{While the theory of the Thurston norm gives the most complete picture of the ways in which a $3$-manifold fibers over $S^1$, earlier examples of this phenomenon were found by J. Tollefson \cite{tollefson} and D. Neumann \cite{neumann}.} showed that there are in fact infinitely many ways to express $M$ as a surface bundle over $S^1$, with finitely many fibers of each genus $h \ge 2$ \cite{thurstonnorm}. In contrast, F.E.A. Johnson showed that every surface bundle over a surface $\Sigma_g \to E^4 \to \Sigma_h$ with $g,h \ge 2$ has at most finitely many fiberings (see \cite{FEA2}, \cite{hillman}, \cite{rivinfiber} or Proposition \ref{proposition:bound} for various accounts). It is possible to deduce from Johnson's work that there is a universal upper bound on the number of fiberings that any surface bundle over a surface $E^4$ can have, as a function of the Euler characteristic $\chi(E)$. Specifically, Proposition \ref{proposition:bound} shows that if $E^4$ satisfies $\chi(E) = 4d$, then $E$ has at most $\sigma_0(d) (d+1)^{2d+6}$ fiberings as a surface bundle over a surface, where $\sigma_0(d)$ denotes the number of positive divisors of $d$.

The simplest example of a surface bundle over a surface with multiple fiberings
\footnote{The most straightforward notion of ``distinction'' for fiberings is that of fiberwise diffeomorphism. In this paper, we will also have occasion to consider a strictly stronger notion known as ``$\pi_1$-fiberwise diffeomorphism''. See Section \ref{section:construction} for the precise definition of $\pi_1$-fiberwise diffeomorphism, and see Proposition \ref{proposition:fiberings}, as well as Remark \ref{remark:pi1}, for a discussion of why we adopt this convention.} is that of a product $\Sigma_g \times \Sigma_h$, which has the two projections onto the factors $\Sigma_g$ and $\Sigma_h$. Prior to the results of this paper, there was essentially one general method for constructing nontrivial examples of surface bundles over surfaces with multiple fiberings, and they all yielded bundles with only two known fiberings (although it is in theory possible that these examples could admit three or more, cf Question \ref{question:known}). Such examples were first constructed by Atiyah and Kodaira (see \cite{atiyah}, \cite{kodaira}, as well as the account in \cite{moritabook}), and proceeded by taking a fiberwise branched covering of particular ``diagonally embedded'' submanifolds of products of surfaces.

It is worth remarking that if one is willing to relax the requirement that both the base and fiber surface have negative Euler characteristic, then it is possible to construct examples of $4$-manifolds $E$ admitting infinitely many fibrations over the torus $T^2$. If $M^3$ is a $3$-manifold admitting infinitely many fibrations over $S^1$, then $E = M^3 \times S^1$ has the required properties. However, Johnson's result indicates that when $g,h \ge 2$, the situation is necessarily much more rigid and correspondingly richer. The mechanism by which $E = M^3 \times S^1$ admits infinitely many fiberings is completely understood via the theory of the Thurston norm. In contrast, in the case $g,h \ge 2$, entirely new phenomena will necessarily occur. 

This paucity of examples, combined with the interesting features of the known constructions, led to the author's interest in surface bundles over surfaces with multiple fiberings. In \cite{salter}, the author established the following theorem which shows a certain rigidity among a particular class of surface bundles over surfaces. Let $\Mod_g$ denote the mapping class group of the closed surface $\Sigma_g$, and let $\mathcal{I}_g$ denote the {\em Torelli group}, i.e. the subgroup of $\Mod_g$ that acts trivially on $H_1(\Sigma_g, \Z)$. The {\em Johnson kernel} $\mathcal K_g$ is defined to be the subgroup of $\mathcal I_g$ generated by the set of Dehn twists about separating simple closed curves. Recall that the {\em monodromy} of a surface bundle $\Sigma_g \to E \to B$ is the homomorphism $\rho: \pi_1 B \to \Mod_g$ recording the mapping class of the diffeomorphism obtained by transporting a fiber around a loop in the base. 

\begin{theorem}[Uniqueness of fiberings: \cite{salter}, Theorem 1.2]\label{theorem:kg}
Let $\pi: E \to B$ be a surface bundle over a surface with monodromy in the Johnson kernel $\mathcal{K}_g$. If $E$ admits two distinct structures as a surface bundle over a surface then $E$ is diffeomorphic to $B \times B'$, the product of the base spaces. In other words, any nontrivial surface bundle over a surface with monodromy in $\K_g$ fibers as a surface bundle in a unique way. 
\end{theorem}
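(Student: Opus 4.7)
The plan is to consider the combined map $f := (p_1, p_2): E \to B \times B'$, writing $p_1 := \pi$ and $p_2: E \to B'$ for the hypothetical second fibering with respective fibers $F_1 \cong \Sigma_g$ and $F_2$. The two fiberings give short exact sequences
\begin{equation*}
1 \to \pi_1(F_i) \to \pi_1(E) \to \pi_1(B_i) \to 1
\end{equation*}
(with $B_1 = B$ and $B_2 = B'$), and $\ker(f_*) = \pi_1(F_1) \cap \pi_1(F_2)$ as normal subgroups of $\pi_1(E)$. My goal would be to show this intersection is trivial: once that holds, the two normal subgroups commute (their commutator lies in the intersection), and together with a surjectivity check they decompose $\pi_1(E) \cong \pi_1(F_1) \times \pi_1(F_2)$. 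This in turn forces the monodromy of $p_1$ to act by trivial conjugation on $\pi_1(F_1)$, hence to be trivial in $\Mod_g$, making $p_1$ a trivial bundle $B \times F_1 = B \times B'$.

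For the core triviality of the intersection I would study the restriction $q := p_2|_{F_1}: F_1 \to B'$, a smooth map between closed orientable surfaces whose degree controls $\pi_1(F_1) \cap \pi_1(F_2)$ (degree $\pm 1$ corresponds to $F_1$ meeting each fiber of $p_2$ in a single point, i.e.\ to trivial intersection on $\pi_1$). The Johnson-kernel hypothesis enters in two stages: (i) from $\K_g \subset \mathcal{I}_g$, the monodromy of $p_1$ acts trivially on $H_1(F_1,\Z)$, yielding a K\"unneth-type splitting of $H^*(E)$ that pins down $q^*$ at the homological level; (ii) from the further vanishing of the Johnson homomorphism $\tau$, the monodromy acts trivially on the second nilpotent quotient $\pi_1(F_1)/\Gamma_3$, providing the rigidity needed to rule out the remaining higher-degree possibilities for $q$ (as in Atiyah--Kodaira constructions). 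Combining these should force $\deg(q) = \pm 1$ and $q_*$ surjective on $\pi_1$, so that $q$ is a homotopy equivalence and in particular $\pi_1(F_1) \cap \pi_1(F_2) = 1$.

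The main obstacle is step (ii): the Torelli condition alone is not sufficient, as the Atiyah--Kodaira examples and those constructed later in this paper show surface bundles with Torelli monodromy admitting multiple non-product fiberings. Thus the argument must genuinely use that separating Dehn twists generate $\K_g$ and act trivially modulo $\Gamma_3$, presumably via a diagram-chase through the two short exact sequences showing that a degree $d > 1$ map $q$ forces the monodromy of $p_1$ to have non-trivial image under $\tau$, contradicting the Johnson-kernel hypothesis. This is also exactly the feature that distinguishes the theorem from an analogous statement for Torelli monodromy, where it fails.
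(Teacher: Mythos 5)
First, a point of comparison: this paper does not prove Theorem \ref{theorem:kg} at all --- it is quoted from \cite{salter} (Theorem 1.2 there), so there is no proof in the present text to measure your argument against. Judged on its own, your proposal is a strategy outline rather than a proof, and the decisive step is missing. The entire content of the theorem is the mechanism by which the Johnson-kernel hypothesis (vanishing of the Johnson homomorphism $\tau$) rules out a second non-product fibering, and at exactly that point you write that the argument ``presumably'' proceeds by a diagram chase showing that $\deg(q)>1$ forces $\tau\ne 0$. Nothing in the proposal supplies that mechanism, and it is precisely the part that cannot be routine: as you yourself note, and as the Korkmaz-type examples of Section \ref{section:construction} show, every formal ingredient you use (two short exact sequences, the intersection $\pi_1 F_1\cap\pi_1 F_2$, triviality of the action on $H_1$ and even on low nilpotent quotients of $\pi_1 E$-level data) is present for Torelli monodromy, where the conclusion fails. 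Indeed, in the basic construction here the circle fiber of $\pi_1 UT\Sigma_g$ lies in the kernel of both projections, so $\pi_1 F_1\cap\pi_1 F_2\ne 1$ for a non-product bundle with Torelli monodromy; your reduction correctly identifies the right intermediate statement, but gives no argument that $\mathcal{K}_g$-monodromy kills such elements.

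There are also two smaller but genuine errors in the reduction itself. First, $\deg(q)=\pm 1$ does not make $q\colon F_1\to B'$ a homotopy equivalence, nor does it force $\ker(q_*)=\pi_1 F_1\cap\pi_1 F_2$ to vanish: a degree-one map between closed surfaces of different genera is $\pi_1$-surjective with nontrivial (necessarily infinitely generated) kernel, and the equality of genera of $F_1$ and $B'$ is part of what has to be proved, not an input. Second, even granting trivial intersection, the subgroup $\pi_1 F_1\cdot\pi_1 F_2$ is a priori only of finite index in $\pi_1 E$ (via the finitely-generated-normal-subgroup fact used in Proposition \ref{proposition:bound}); the ``surjectivity check'' you defer needs an actual argument (e.g.\ an Euler characteristic or covering-space step) before you can conclude $\pi_1 E\cong\pi_1 F_1\times\pi_1 F_2$ and hence trivial monodromy. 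For the real proof you should consult \cite{salter}, where the Johnson-kernel hypothesis enters through cohomological invariants of the bundle rather than through a degree computation on a single fiber.
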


This result would seem to reinforce the impression that surface bundles over surfaces with multiple fiberings are extremely rare, and that examples with three or more fiberings should be even more exotic. However, the constructions of this paper show that there is in fact a great deal of flexibility in constructing surface bundles over surfaces with many fiberings. The following is a summary of the constructions given in Section \ref{section:construction}. 

\begin{theorem}[Existence of multiple fiberings]\label{theorem:main}
\ \ 

\begin{packed_enum}
\item\label{item:exist} For each $n \ge 3$ and each $g_1 \ge 2$ there exists a $4$-manifold $E$,  integers $g_2, \dots, g_n$ (which can be chosen so that $g_1, \dots g_n$ are pairwise distinct), and maps $p_i: E \to \Sigma_{g_i} (i = 1, \dots, n)$ realizing $E$ as the total space of a surface bundle over a surface in at least $n$ ways, distinct up to $\pi_1$-fiberwise diffeomorphism. If $g_i \ne g_j$, the fibers of $p_i$ and $p_j$ have distinct genera; consequently $p_i$ and $p_j$ are inequivalent up to fiberwise diffeomorphism whenever $g_i \ne g_j$. 
\item\label{item:torelli} There exist constructions as in (\ref{item:exist}) for which at least one of the monodromy representations $\rho_i: \pi_1 \Sigma_{g_i} \to \Mod_{h_i}$ has image contained in the Torelli group $\mathcal{I}_{h_i} \le \Mod_{h_i}$. 
\item \label{item:exp} There exists a sequence of surface bundles over surfaces $E_n$ for which $\chi(E_n) = 24n - 8$ and such that $E_n$ admits $2^n$ fiberings as a surface bundle over a surface, distinct up to $\pi_1$-fiberwise diffeomorphism.
\end{packed_enum}
\end{theorem}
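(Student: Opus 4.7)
The plan is to make part (\ref{item:exp}) the centerpiece of the argument, deducing parts (\ref{item:exist}) and (\ref{item:torelli}) from variants of the same construction. The strategy for part (\ref{item:exp}) is an inductive construction producing a sequence $E_n$ with exponentially many fiberings and Euler characteristic growing linearly in $n$.

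The key building block should be a \emph{doubling} operation that sends a surface bundle $E \to B$ carrying $k$ distinct fiberings $p_1, \ldots, p_k$ to a new surface bundle $E'$ carrying at least $2k$ distinct fiberings, with $\chi(E') = \chi(E) + 24$. Concretely, one constructs $E'$ (most naturally via a fiberwise branched double cover along a multisection of fixed topology, a fiber sum with a small building block, or a pullback along a finite cover of one of the bases) so that $E'$ carries a natural involution that does not respect the fibering structure inherited from $E$. Each inherited fibering $p_i$ then lifts to two genuinely distinct fiberings of $E'$, doubling the count. Iterating this operation $n-1$ times starting from a base case $E_1$ with $\chi(E_1) = 16$ and two fiberings (for instance, a product $\Sigma_{g_1} \times \Sigma_h$ of appropriate genera) yields $E_n$ with $\chi(E_n) = 24n - 8$ and $2^n$ fiberings, as required.

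Part (\ref{item:exist}) follows by selecting the base case $E_1$ so that one of its fibers has genus exactly $g_1$; since $2^{n-1} \ge n$ for all $n \ge 3$, the inductive construction produces the required number of fiberings. For part (\ref{item:torelli}), I would modify the construction by first passing to a sufficiently deep characteristic finite cover of one of the bases $\Sigma_{g_i}$, chosen to trivialize the action of the monodromy $\rho_i$ on $H_1(\Sigma_{h_i};\Z)$; the pulled-back bundle then has monodromy landing in $\mathcal{I}_{h_i}$, while retaining the many-fibering property inherited from the doubling construction.

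The principal obstacle is verifying that the $2^n$ fiberings produced at step $n$ are genuinely pairwise distinct as $\pi_1$-fiberwise diffeomorphism classes. Each fibering $p_i$ corresponds to the normal subgroup $\ker(p_{i*}) \trianglelefteq \pi_1 E_n$, and distinctness amounts to showing that these $2^n$ subgroups lie in distinct $\Diff(E_n)$-orbits of conjugacy classes of subgroups of $\pi_1 E_n$. The essential claim, to be maintained inductively, is that the doubling involution at each step does \emph{not} extend to a self-diffeomorphism of $E_n$ permuting the two liftings of any inherited fibering --- so that each application of the doubling operation strictly multiplies the count rather than collapsing pairs of lifts. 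This is a rigidity statement in the spirit of Theorem \ref{theorem:kg} of \cite{salter}, and its verification presumably requires an explicit presentation of $\pi_1 E_n$ together with control over the automorphism group of the relevant short exact sequences $1 \to \pi_1 \Sigma_{h_i} \to \pi_1 E_n \to \pi_1 \Sigma_{g_i} \to 1$.
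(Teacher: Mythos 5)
Your high-level picture (a linear chain of building blocks, each new block adding $24$ to $\chi$ and doubling the number of fiberings) matches what the paper actually does -- Theorem \ref{theorem:nfiberings} applied to a line graph with vertex pieces $\Sigma_3\times\Sigma_3$ gives exactly $\chi(E_n)=24n-8$ and $2^n$ fiberings -- but your proposal leaves the two load-bearing steps unproved, and the criteria you plan to verify are the wrong ones. First, the ``doubling operation'' is never constructed. Of the three mechanisms you float, a fiberwise branched double cover and a pullback along a finite cover of the base both \emph{multiply} Euler characteristic (up to a correction), so they cannot produce $\chi(E')=\chi(E)+24$ along a sequence with growing $\chi$; and a fibering of $E$ does not induce a fibering of such an $E'$ over the same bases in any automatic way. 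The only viable mechanism is the fiberwise connect sum (section sum) along graphs of a free involution that the paper uses, and there the entire content is in the details you omit: the new piece must be glued along unit tangent bundle boundaries with matching Euler numbers, and -- crucially -- for \emph{each} choice of horizontal/vertical projection on each piece one must exhibit a gluing map (the straight-line isotopy $h_1$ in translation-surface coordinates) and collar neighborhoods making that particular projection smooth. Without this, ``each inherited fibering lifts to two fiberings of $E'$'' is an assertion, not a construction. Second, your distinctness criterion is too strong and is in fact false for the natural examples. By Proposition \ref{proposition:fiberings}, distinctness up to $\pi_1$-fiberwise diffeomorphism means only that the fiber subgroups $\pi_1 F_i\le\pi_1 E$ are \emph{distinct subgroups}; it does not require them to lie in distinct $\Diff(E)$-orbits. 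Indeed, in the basic construction the four fiberings \emph{are} fiberwise diffeomorphic via factor-swapping involutions (Remark \ref{remark:pi1}), so your inductive rigidity claim -- that the doubling involution does not extend to a self-diffeomorphism permuting the lifts -- is exactly the kind of statement that fails here, and trying to prove it would sink the induction. The correct (and much easier) verification is the one in Theorem \ref{theorem:distinct}: exhibit a loop in one fiber whose image under the other projection is an essential loop in the base.

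Your reduction of part (\ref{item:torelli}) is also flawed. Passing to a finite (even characteristic) cover of the base restricts the monodromy to a finite-index subgroup of $\pi_1\Sigma_{g_i}$, and the image of a finite-index subgroup under $\rho_i$ (hence under the symplectic representation $\pi_1\Sigma_{g_i}\to\Sp(2h_i,\Z)$) has finite index in the original image; so unless the homological monodromy already has finite image -- which you have no reason to expect and do not verify -- no finite cover lands you in $\mathcal I_{h_i}$. (Pulling back can also disconnect the fibers of the other fiberings, threatening the ``many-fibering'' property you want to retain.) The paper instead gets the Torelli statement from a specific construction: the double of $\Sigma_g\times\Sigma_g$ along the boundary of a neighborhood of the diagonal, whose monodromy is identified with the Korkmaz point-pushing embedding $\pi_1\Sigma_g\to\mathcal I_{2g}$ and shown to be Torelli directly (Lemma \ref{lemma:rhotorelli}, Theorem \ref{theorem:monodromy}); the general graph examples are only Lagrangian, not Torelli. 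Similarly, part (\ref{item:exist}) for arbitrary $g_1\ge 2$ needs the covering-space variant (Theorem \ref{theorem:fibervariant}) rather than a product base case, since the line-graph construction forces the common base to admit a free involution. So while your blueprint is in the right spirit, the proposal as written is missing the actual gluing construction, uses an incorrect notion of distinctness, and relies on a finite-cover trick for the Torelli statement that does not work.
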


The bound of Proposition \ref{proposition:bound} makes it sensible to define the following function:
\[
N(d) := \max\left\{\begin{array}{c|c}
n 	&	\mbox{there exists $E^4, \chi(E) \le 4d$, $E$ admits $n$ surface bundle structures}\\
 	& \mbox{distinct up to $\pi_1$-fiberwise diffeomorphism.}\end{array}
\right\}
\]
Phrased in these terms, (\ref{item:exp}) of Theorem \ref{theorem:main}, in combination with the upper bound of Proposition \ref{proposition:bound} implies that
\[
 2^{(d+2)/6} \le N(d) \le \sigma_0(d) (d+1)^{2d+6},
\]
where $\sigma_0(d)$ denotes the number of positive divisors of $d$. This should be compared to the previous lower bound $N(d) \ge 2$.

An additional corollary of Theorem \ref{theorem:main} is that it demonstrates the optimality of Theorem \ref{theorem:kg}. The {\em Johnson filtration} is a natural filtration $\mathcal{I}_g(k)$ on $\Mod_g$ recording how mapping classes act on nilpotent quotients of $\pi_1 \Sigma_g$. The first three terms in the filtration are given by $\mathcal{I}_g(1) = \Mod_g$, and $\mathcal{I}_g(2) = \mathcal{I}_g$, and $\mathcal{I}_g(3) = \mathcal{K}_g$. It follows from Theorem \ref{theorem:main} (\ref{item:torelli}) that Theorem \ref{theorem:kg} is optimal with respect to the Johnson filtration, in that there exist surface bundles over surfaces with multiple fiberings with monodromy contained in $\mathcal{I}_g$ and $\Mod_g$. 

 \para{Acknowledgements} I would like to thank Dan Margalit for extending an invitation to the 2014 Georgia Topology Conference where this project was begun, for bringing the Korkmaz construction to our attention and for other helpful discussions and comments. I would also like to thank Inanc Baykur, Jonathan Hillman, Andy Putman, and Bena Tshishiku for helpful conversations and suggestions. I would like to thank the referees for their detailed suggestions and corrections. Lastly I would like to thank Benson Farb for his continued support and guidance, as well as for extensive comments and corrections on this paper.

\section{The examples}
\para{The basic construction}\label{section:construction}
To illustrate our general method we start by describing a construction of a surface bundle over a surface $E$ admitting four fiberings $p_1, p_2, p_3, p_4: E \to \Sigma_g$. The monodromy of this bundle was first considered by Korkmaz\footnote{Unpublished; communicated to the author by D. Margalit.}, as an example of an embedding of a surface group inside the Torelli group. Related constructions were also used by Baykur and Margalit to construct Lefschetz fibrations that are not fiber-sums of holomorphic ones in \cite{margalit}. For what follows it will be necessary to give a direct topological construction of the total space. 

The method of construction is to perform a ``section sum'' of two surface bundles over surfaces (see \cite{margalit2} for a discussion of the section sum operation, including an equivalent description on the level of the monodromy representation). Let $\Sigma_{g_1} \to M_1 \to \Sigma_{h}$ and $\Sigma_{g_2} \to M_2 \to \Sigma_h$ be two surface bundles over a base space $\Sigma_h$, and for $i = 1,2$ let $\sigma_i: \Sigma_h \to M_i$ be sections of $M_1, M_2$. If the Euler numbers of $\sigma_1,\sigma_2$ are equal up to sign, then it is possible to perform a fiberwise connect-sum of $M_1, M_2$ along tubular neighborhoods of $\im(\sigma_i)$ (possibly after reversing orientation), giving rise to a surface bundle $\Sigma_{g_1 + g_2} \to M \to \Sigma_h$. In what follows, we will give a more detailed description of this construction and explain how it can be used to produce surface bundles over surfaces with many fiberings. 

\begin{remark}
We have chosen to present an example here where all of the fiberings have the same genus. In fact, the four fiberings presented here are equivalent up to fiberwise diffeomorphism, but {\em not} up to $\pi_1$-fiberwise diffeomorphism. We stress here that this is {\em not} an essential feature of the general method of construction described in the paper, but merely the simplest example which requires the least amount of cumbersome notation. See Remark \ref{remark:pi1} for more on why $\pi_1$-fiberwise diffeomorphism is an important notion of equivalence for our purposes, and see Theorem \ref{theorem:fibervariant} for the most general method of construction, which can produce $4$-manifolds that fiber as surface bundles in arbitrarily many ways with surfaces of distinct genera. It is worth noting that if $E^4$ fibers as a $\Sigma_g$-bundle and a $\Sigma_h$-bundle, for $g \ne h$, then clearly these two fiberings are distinct, up to bundle isomorphism, fiberwise diffeomorphism, or $\pi_1$-fiberwise diffeomorphism, since the fibers are not even homeomorphic!
\end{remark}

For $g \ge 2$, consider the product bundle $E_1 = \Sigma_g \times \Sigma_g$ with projection maps $p_V, p_H: E_1 \to \Sigma_g$ onto the first (resp. second) factor. Let $N$ be an open tubular neighborhood of the diagonal $\Delta$. The manifold $E$ is then constructed as the double
\[
E = (E_1 \setminus N) \cup_{\partial \overline{N}} (E_1 \setminus N),
\]
where the boundary components $\partial N$ are identified via the identity map. We let $E^+, E^-$ denote the ``upper'' and ``lower'' copies of $E_1 \setminus N$ contained in $E$. See Figure \ref{figure:e2sketch}.

\begin{center}
\begin{figure}
\includegraphics[scale = 0.75]{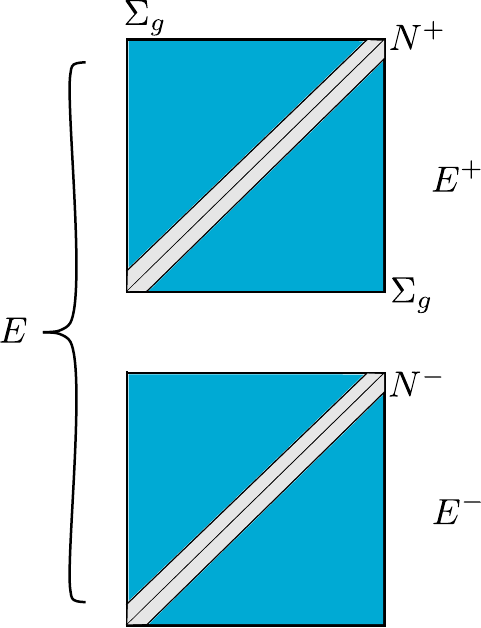}
\caption{A cartoon rendering of $E$, depicted as shaded. The boundaries are identified.}
\label{figure:e2sketch}
\end{figure}
\end{center}

$E$ is equipped with four fiberings $p_1, p_2, p_3, p_4: E \to \Sigma_g$. These correspond to the four combinations of horizontal and vertical fiberings on $E^+$ and $E^-$. For each $p_i$, we will exhibit collar neighborhoods of $\partial E^\pm$ relative to which the given $p_i$ will be smooth. 

To describe these collar neighborhoods, we endow $\Sigma_g$ with the structure of a Riemann surface. Via uniformization, this gives rise to a Riemannian metric, inducing a path metric $d$ on $\Sigma_g$. Relative to $d$, there is a neighborhood $N$ of the diagonal $\Delta$, for suitably small $\epsilon$, given via
\[
N = \{(z,w) \in \Sigma_g \times \Sigma_g \mid d(z,w) < \epsilon\}.
\]
The boundary $\partial \overline{N}$ is parameterized via the Riemannian exponential map $\exp_z$ at each $z \in \Sigma_g$ (for convenience we locally parameterize the circle of radius $\epsilon$ about $0 \in T_z \Sigma_g$ using the complex exponential):
\begin{align*}
\partial \overline N &= \{(z,w) \mid \abs{z -w} = \epsilon\}\\
 & = \{(z, \exp_z(\epsilon e^{i \theta})) \mid \theta \in [0, 2 \pi)\}\\
 & = \{(\exp_z(-\epsilon e^{i \theta}), z) \mid \theta \in [0, 2 \pi)\}.
\end{align*}

$p_1$ is defined using the vertical projection $p_V$ on each component. A suitable collar neighborhood (on either component) is given locally (for $t \in [1, 2)$) by
\[
\theta_V(z,\epsilon, t) = (z, \exp_z(t\epsilon e^{i \theta})).
\]

Similarly $p_2$ is defined using the horizontal projection $p_H$ on each component. A suitable collar neighborhood of either boundary component is now given locally (again for $t \in [1,2)$) by
\[
\theta_H(z,\epsilon, t) = (\exp_z(-t\epsilon e^{i \theta}), z).
\]

The remaining projections $p_3, p_4$ are defined using $p_V$ on one component and $p_H$ on the other. To realize these as smooth maps it will be necessary to modify the choice of boundary identification made in the construction of $E$. Consider the isotopy $h_t: \partial \overline{N} \times [0,1] \to \partial \overline{N}$ given locally by

\[
h_t(z, \exp_z(\epsilon e^{i \theta})) = (\exp_z(-t \epsilon e^{i \theta}), \exp_z((1-t) \epsilon e^{i\theta}))).
\]
More intrinsically, $h_t$ acts by rigidly translating the pair $(z,w)$ a distance $t \epsilon$ along the geodesic ray from $w$ to $z$; from this point of view it is clear that $h_t$ is a diffeomorphism, and so $h$ is indeed an isotopy. 

As $h_0 = \id$, there is a diffeomorphism
\[
f: E \to (E_1 \setminus N) \cup_{h_1} (E_1 \setminus N).
\]

$p_3$ is defined on $(E_1 \setminus N) \cup_{h_1} (E_1 \setminus N)$ using $p_V$ on the first component and $p_H$ on the second. Note that
\[
p_V(z, \exp_z(\epsilon e^{i \theta})) = (p_H \circ h_1) (z, \exp_z(\epsilon e^{i \theta})) = z,
\] 
so $p_3$ is well-defined. Moreover, $p_3$ is smooth relative to the collar neighborhoods $\theta_V$ on the first component and $\theta_H$ on the second. 

Completely analogously, $p_4$ is defined on $(E_1 \setminus N) \cup_{h_1} (E_1 \setminus N)$ using $p_H$ on the first component and $p_V$ on the second. See Figures \ref{figure:newfibering} and \ref{figure:newfibering2} for some depictions of the fibering $p_4$. 

It is clear that each $p_i$ is a proper surjective submersion; consequently by Ehresmann's theorem each $p_i$ realizes $E$ as the total space of a fiber bundle. In each case the base space is $\Sigma_g$, while the fiber is $\Sigma_g \# \Sigma_g \cong \Sigma_{2g}$. \\

We next recall the notion of {\em $\pi_1$-fiberwise diffeomorphism} from \cite{salter}. We say that two fiberings $p_1: E \to B_1$, $p_2: E \to B_2$ of a surface bundle are {\em $\pi_1$-fiberwise diffeomorphic} if 
\begin{packed_enum}
\item The bundles $p_1: E \to B_1$ and $p_2: E \to B_2$ are fiberwise diffeomorphic. That is, there exists a commutative diagram
\[
\xymatrix{
E \ar[r]^\phi \ar[d]_{p_1}		& E \ar[d]^{p_2}\\
B_1 \ar[r]_\alpha			& B_2
}
\]
with $\phi, \alpha$ diffeomorphisms.

\item The induced map $\phi_*$ preserves $\pi_1 F_1$, i.e. $\phi_*(\pi_1 F_1) = \pi_1 F_1$ (here, as always, $F_i$ denotes a fiber of $p_i$). 
\end{packed_enum}

In \cite{salter}, we gave the following criterion for two bundle structures to be distinct up to $\pi_1$-fiberwise diffeomorphisms (Proposition 2.1 of that paper):
\begin{proposition}\label{proposition:fiberings}
Suppose $E$ is the total space of a surface bundle over a surface in two ways: $p_1: E \to B_1$ and $p_2: E \to B_2$. Let $F_1, F_2$ denote fibers of $p_1, p_2$ respectively. Then the following are equivalent:
\begin{enumerate}
\item The fiberings $p_1, p_2$ are $\pi_1$-fiberwise diffeomorphic.
\item The fiber subgroups $\pi_1 F_1, \pi_1 F_2 \le \pi_1 E$ are equal.
\end{enumerate}
If $ \deg (p_1 \times p_2) \ne 0$ then the bundle structures $p_1$ and $p_2$ are distinct. 
\end{proposition}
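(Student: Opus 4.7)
The plan is to establish the equivalence of (1) and (2) first, and then derive the degree statement as a short corollary.

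For $(1) \Rightarrow (2)$, the strategy is simply to unpack the definition. Any fiberwise diffeomorphism $(\phi, \alpha)$ sends the fiber $F_1 = p_1^{-1}(b)$ onto $\phi(F_1) = p_2^{-1}(\alpha(b))$, which is a fiber of $p_2$. Since each fiber subgroup $\pi_1 F_i$ is a normal subgroup of $\pi_1 E$, and hence well-defined independently of the choice of basepoint inside a fiber, this yields $\phi_*(\pi_1 F_1) = \pi_1 F_2$ as normal subgroups of $\pi_1 E$. The additional condition $\phi_*(\pi_1 F_1) = \pi_1 F_1$ in the definition of $\pi_1$-fiberwise diffeomorphism then forces $\pi_1 F_1 = \pi_1 F_2$.

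The direction $(2) \Rightarrow (1)$ is the main content, and my approach will be to invoke the classification of aspherical fiber bundles. The key input is that for $g \ge 2$ the group $\pi_1 \Sigma_g$ has trivial center and $\Sigma_g = K(\pi_1 \Sigma_g, 1)$, so a $\Sigma_g$-bundle over an aspherical base $B$ is determined up to bundle isomorphism by its associated extension
\[
1 \to \pi_1 \Sigma_g \to \pi_1 E \to \pi_1 B \to 1.
\]
Given $\pi_1 F_1 = \pi_1 F_2$ as normal subgroups of $\pi_1 E$, the two extensions arising from $p_1$ and $p_2$ have the same middle term and the same kernel; the induced isomorphism between the quotients $\pi_1 B_1 \cong \pi_1 B_2$ can be realized by a diffeomorphism $\alpha: B_1 \to B_2$ via Dehn--Nielsen--Baer. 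The classification then produces a bundle isomorphism $(\phi, \alpha)$, which by construction satisfies $\phi_*(\pi_1 F_1) = \pi_1 F_1$ and is therefore a $\pi_1$-fiberwise diffeomorphism.

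For the degree statement I would argue by contraposition: if $p_1, p_2$ are $\pi_1$-fiberwise diffeomorphic, then by the equivalence above there is a common normal subgroup $K := \pi_1 F_1 = \pi_1 F_2 \triangleleft \pi_1 E$. The map $(p_1 \times p_2)_*: \pi_1 E \to \pi_1 B_1 \times \pi_1 B_2$ then has kernel containing $K$ and factors through the quotient $\pi_1 E / K \cong \pi_1 B_1$. Since $E, B_1, B_2$ are all aspherical, $p_1 \times p_2$ is homotopic to a map factoring through $K(\pi_1 B_1, 1) \simeq B_1$, a $2$-dimensional space. Hence the induced map on $H_4$ is zero, so $\deg(p_1 \times p_2) = 0$, contrary to hypothesis. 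The main obstacle is pinning down the precise classification statement in $(2) \Rightarrow (1)$, taking care of basepoint conventions and the centerless-fiber hypothesis; everything else is bookkeeping.
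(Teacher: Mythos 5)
Your argument is correct and is essentially the standard one: this paper only quotes the proposition from \cite{salter}, and the proof there runs along the same lines --- the easy direction $(1)\Rightarrow(2)$ using normality of the fiber subgroups, the direction $(2)\Rightarrow(1)$ via the classification of bundles with aspherical, centerless fiber over an aspherical base by their fundamental-group extensions (Earle--Eells together with Dehn--Nielsen--Baer), and the degree claim via asphericity, since $p_1\times p_2$ then factors up to homotopy through the $2$-dimensional space $B_1$, killing $H_4$. The only step worth spelling out is that the classification gives a bundle isomorphism $p_1\cong\alpha^*p_2$ over $\mathrm{id}_{B_1}$, which you compose with the canonical map $\alpha^*E\to E$ covering $\alpha$ to obtain the pair $(\phi,\alpha)$, after which $\phi_*(\pi_1F_1)=\pi_1F_2=\pi_1F_1$ follows from hypothesis (2).
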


With this characterization in mind, we will establish the following theorem.
\begin{theorem}\label{theorem:distinct}
The fiberings $p_i: E \to \Sigma_g$ for $i = 1,2,3,4$ constructed above are pairwise distinct up to $\pi_1$-fiberwise diffeomorphisms. 
\end{theorem}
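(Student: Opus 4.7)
The plan is to apply Proposition \ref{proposition:fiberings}, which reduces the claim to showing that for each pair $i \ne j$, the fiber subgroups $\pi_1 F_i, \pi_1 F_j \le \pi_1 E$ are distinct. Label each fibering by its pair of projection choices on $E^+$ and $E^-$: $p_1 = (V,V)$, $p_2 = (H,H)$, $p_3 = (V,H)$, $p_4 = (H,V)$. For any $i \ne j$, at least one of the two sides $E^\pm$ carries opposite projection choices ($\{s^\pm_i, s^\pm_j\} = \{V,H\}$), since otherwise $p_i = p_j$. On such a ``non-degenerate'' side, $(p_i \times p_j)|_{E^\pm}$ is, up to the swap diffeomorphism of $\Sigma_g \times \Sigma_g$, the inclusion of $E^\pm = \Sigma_g \times \Sigma_g \setminus N$; on a ``degenerate'' side the map factors through the diagonal or antidiagonal.

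For the four pairs $(1,3), (1,4), (2,3), (2,4)$, exactly one side is non-degenerate. A direct preimage count then yields $\deg(p_i \times p_j) = \pm 1$: the degenerate side contributes $0$ (its image is $2$-dimensional in a $4$-dimensional target), and the non-degenerate side contributes $\pm 1$. Distinctness for these four pairs follows immediately from Proposition \ref{proposition:fiberings}.

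The main obstacle is the two pairs $(p_1, p_2)$ and $(p_3, p_4)$, for which both sides are non-degenerate: the two $\pm 1$ contributions cancel (the orientation reversal inherent in the doubling construction flips the sign on $E^-$), forcing $\deg(p_i \times p_j) = 0$ and making the degree criterion inapplicable. I would resolve this by showing instead that $(p_i \times p_j)_*: H_1(E;\Q) \to H_1(\Sigma_g \times \Sigma_g;\Q) = \Q^{4g}$ is surjective. This is enough: were $\pi_1 F_i = \pi_1 F_j =: K$, both $(p_i)_*, (p_j)_*$ would factor through $\pi_1 E / K \cong \pi_1 \Sigma_g$, so the image of $(p_i \times p_j)_*$ on $H_1$ would have rank at most $2g < 4g$, contradicting surjectivity. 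The surjectivity itself follows by restricting to either $E^\pm$: there $(p_i \times p_j)|_{E^\pm}$ is (up to swap) the inclusion $\Sigma_g \times \Sigma_g \setminus N \hookrightarrow \Sigma_g \times \Sigma_g$, which induces an $H_1$-surjection because $\Delta$ has codimension $2$ in the ambient $4$-manifold.
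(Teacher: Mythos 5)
Your proposal is correct, but it takes a genuinely different route from the paper. The paper's proof is uniform over all six pairs: using the short exact sequence $1 \to \pi_1 F_j \to \pi_1 E \to \pi_1\Sigma_g \to 1$, it identifies $\pi_1 F_j$ with $\ker (p_j)_*$ and then exhibits an explicit non-peripheral essential loop $\gamma$ in $F_i \cap E^\pm$ (taken in a side where $p_i$ and $p_j$ use opposite projections) with $p_j(\gamma)$ essential in $\Sigma_g$, so $\pi_1 F_i \ne \pi_1 F_j$ directly. You instead split into cases: for the four pairs with exactly one ``non-degenerate'' side you invoke the degree criterion of Proposition \ref{proposition:fiberings} with $\deg(p_i\times p_j)=\pm 1$, and for $(p_1,p_2)$, $(p_3,p_4)$ --- where the doubling orientation indeed forces the degree to vanish --- you argue that $(p_i\times p_j)_*$ is surjective on $H_1(\,\cdot\,;\Q)$, which is incompatible with both projections factoring through a common quotient $\pi_1 E/K \cong \pi_1\Sigma_g$ of rank $2g < 4g$. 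Both steps are sound. Two remarks: first, your homological argument actually subsumes the degree computation, since for \emph{every} pair $i\ne j$ there is a side on which $(p_i\times p_j)$ restricts (up to the swap and a homotopy absorbing the gluing diffeomorphism) to the inclusion $\Sigma_g\times\Sigma_g\setminus N \hookrightarrow \Sigma_g\times\Sigma_g$, which is $\pi_1$-surjective because $\Delta$ has codimension $2$; so the case division and the regular-value bookkeeping could be dropped entirely. Second, the trade-off is that your argument is more robust (no explicit loops or collar coordinates, and it detects the difference of fiber subgroups already at the level of $H_1$-factorization), while the paper's is more elementary and immediately pinpoints a group element witnessing $\pi_1 F_i \ne \pi_1 F_j$.
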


\begin{proof}
To show that the projections $p_i$ as defined are pairwise distinct, we will appeal to condition (2) of Proposition \ref{proposition:fiberings}. For each $i$, the long exact sequence in homotopy of a fibration reduces to a short exact sequence
\[
\xymatrix{
1 \ar[r]	& \pi_1 F_i \ar[r]	& \pi_1 E \ar[r]^{p_{i,*}}	& \pi_1 \Sigma_g \ar[r]	& 1.
}
\]
To show that $\pi_1 F_i$ and $\pi_1 F_j$ are distinct for distinct $i,j$, it therefore suffices to produce an element $x \in \pi_1 F_i$ such that $p_{j,*}(x) \ne 1$ in $\pi_1 \Sigma_g$. 
Let $i$ and $j$ be distinct. Without loss of generality, suppose that $p_i$ is defined via $p_V$ on $E^+$, while $p_j$ is defined on $E^+$ via $p_H$. Let $F_i$ and $F_j$ denote generic fibers of $p_i, p_j$ respectively. Both of $F_i \cap E^+$ and $F_j \cap E^+$ are homeomorphic to $\Sigma_g^1$, the surface of genus $g$ and one boundary component.

 Let $\gamma \subset \Sigma_g^1$ be a non-peripheral loop representing a nontrivial element of $\pi_1 \Sigma_g^1$, and identify $\gamma$ with a loop in $F_i$. Then $[\gamma] \in \pi_1 F_i$ by construction (and is nontrivial), while $p_{j}(\gamma) = p_{H}(\gamma) = \gamma$. Here $\gamma$ is viewed as a loop in $\Sigma_g$ under the natural inclusion of $\Sigma_g^1$. As $\gamma$ was chosen to be non-peripheral and essential in $\Sigma_g^1$, it remains homotopically nontrivial in $\Sigma_g$. It follows that $\pi_1 F_i$ and $\pi_1 F_j$ are distinct for all distinct $i,j \in \{1,2,3,4\}$. Per Proposition \ref{proposition:fiberings}, $p_i$ and $p_j$ are not $\pi_1$-fiberwise diffeomorphic as claimed.
\end{proof}

\begin{center}
\begin{figure}
\includegraphics[scale = 0.8]{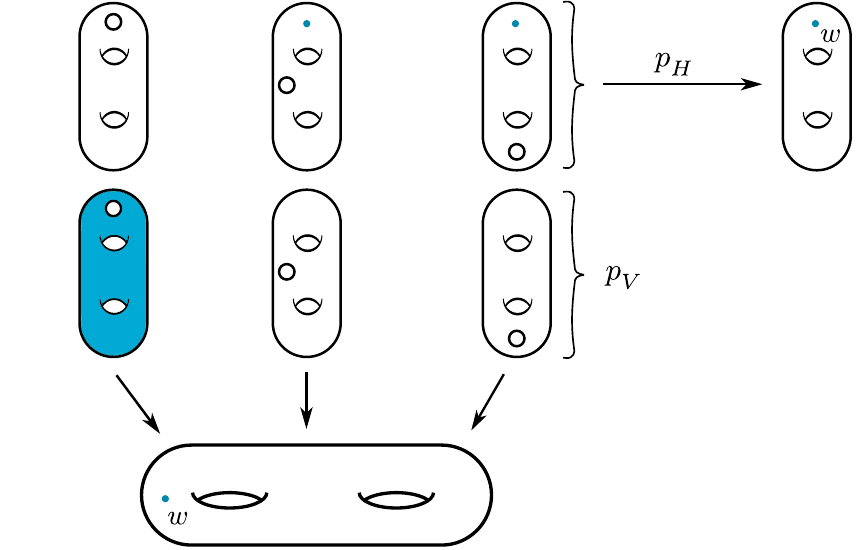}
\caption{The fibering $p_4: E \to \Sigma_g$. The fiber over $w \in \Sigma_g$ is shaded. On the upper portion of the bundle it intersects each of the $p_V$-fibers in a single point.}
\label{figure:newfibering}
\end{figure}
\end{center}

\begin{center}
\begin{figure}
\includegraphics[scale = 0.8]{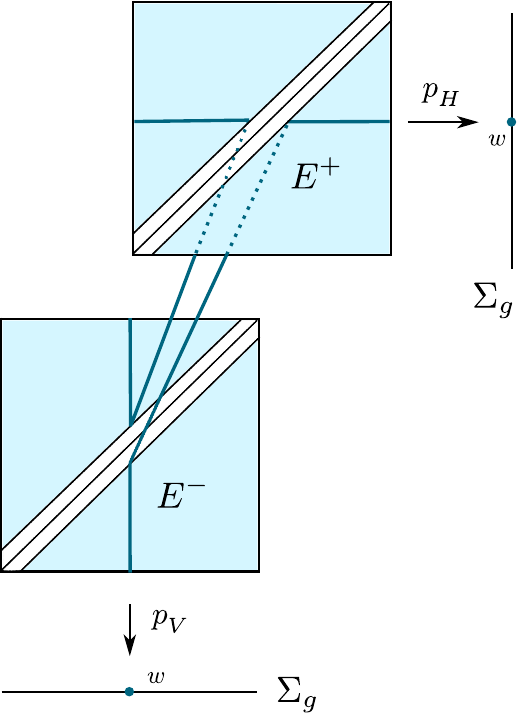}
\caption{A second cartoon sketch of the fibering $p_4$.}
\label{figure:newfibering2}
\end{figure}
\end{center}

\begin{remark}\label{remark:pi1}
As remarked above, the four fiberings constructed above are in fact fiberwise diffeomorphic, by applying factor-swapping involutions $(x,y) \to (y,x)$ on one or more of the components $E^\pm$. This same phenomenon appears for trivial bundles $\Sigma_g \times \Sigma_h$. When $g \ne h$ the projections onto the first and second factors clearly yield inequivalent bundles, as the fibers are not even the same manifold. On the other hand, when $g = h$, the factor-swapping involution yields a bundle isomorphism between the horizontal and vertical projections of $\Sigma_g \times \Sigma_g$. However, in both of these examples the fiberings are not $\pi_1$-fiberwise diffeomorphic. Moreover, Proposition \ref{proposition:fiberings} shows that $\pi_1$-fiberwise diffeomorphism is equivalent to the natural notion of equivalence on the group-theoretic level. For this reason, we believe that $\pi_1$-fiberwise diffeomorphism is an important notion of equivalence for surface bundles over surfaces. By using the techniques of Theorem \ref{theorem:fibervariant}, one can construct surface bundles over surfaces with arbitrarily many fiberings for which the fibers all have distinct genera, and therefore certainly give examples of bundles where the fiberings are not fiberwise diffeomorphic.\end{remark}

\begin{remark}\label{remark:pi1again}
Via the Seifert-van Kampen theorem, it is possible to compute 
\begin{equation} \label{eqn:isom}
\pi_1 E \cong \Gamma *_{\pi_1 UT \Sigma_g} \Gamma,
\end{equation}
where $\Gamma = \pi_1 (\Sigma_g \times \Sigma_g \setminus N)$ and $UT \Sigma_g$ denotes the unit tangent bundle. Let 
\[
\varpi_1: \Sigma_g \times \Sigma_g \setminus N \to \Sigma_g
\]
 denote the vertical projection, and define $\varpi_2$ similarly as the horizontal projection. Relative to the isomorphism of (\ref{eqn:isom}), the induced maps of the four fiberings $(p_i)_*: \pi_1E \to \pi_1\Sigma_g$ correspond to the four amalgamations $(\varpi_i)_* * (\varpi_j)_*: \Gamma *_{\pi_1 UT\Sigma_g} \Gamma \to \pi_1 \Sigma_g$.
\end{remark}
\bigskip

As remarked above, the bundle $p_1: E \to \Sigma_g$ was originally considered by Korkmaz (see Footnote 1 of \cite{margalit}), who constructed its monodromy representation as an example of an embedding $\rho: \pi_1 \Sigma_g \to \mathcal{I}_{2g}$. We now give a description of this embedding. Let $\Mod_g^1$ denote the mapping class group of a surface with one boundary component (where as usual the isotopies are required to fix the boundary component pointwise). We will denote this boundary curve by $\eta$. Consider the embedding
\begin{align*}
f: \pi_1(UT(\Sigma_g)) 		&\to  \Mod_g^1 \times \Mod_g^1\\
\alpha 				&\mapsto (\operatorname{Push}(\alpha), F^{-1}\circ \operatorname{Push}(\alpha)\circ F),
\end{align*}
where $F: \Sigma_g^1 \to \Sigma_g^1$ is any orientation-reversing diffeomorphism. Compose this with the map
\[
h: \Mod_g^1 \times \Mod_g^1 \to \Mod_{2g}
\] 
obtained by juxtaposing the mapping classes $(x,y)$ on the two halves of $\Sigma_{2g}$. Let $\gamma \in \pi_1(UT(\Sigma_g))$ denote the loop around the circle fiber in $UT \Sigma_g$ in the positive direction as specified by the orientation on $\Sigma_g$. The map $\operatorname{Push}(\gamma) \in \Mod(\Sigma_g^1)$ corresponds to a positive twist about $\eta$. We claim that $h(f(\gamma)) = \id$. Indeed, the notion of ``positive'' twist is relative to a choice of orientation, and after the boundary components of the two copies of $\Sigma_g^1$ have been identified, the two twists correspond to a positive and negative twist about $\eta$, and so the result is isotopic to the identity. 

The element $\gamma \in \pi_1(UT(\Sigma_g))$ generates a normal subgroup, and the quotient $\pi_1(UT(\Sigma_g)) / \pair{\gamma} \approx \pi_1 \Sigma_g$. Therefore, we arrive at an embedding $\rho: \pi_1 \Sigma_g \to \Mod_{2g}$ as follows.
\[
\xymatrix{
\pi_1(UT(\Sigma_g)) \ar[r]^-f \ar[d] 	& \Mod_g^1 \times \Mod_g^1 \ar[r]^-h 	& \Mod_{2g}\\
\pi_1\Sigma_g \ar@{.>}[urr]_\rho
}
\]

\begin{lemma}\label{lemma:rhotorelli}
The image of $\rho$ is contained in the Torelli group $\mathcal{I}_{2g}$. 
\end{lemma}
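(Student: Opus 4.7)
The plan is to show that for every $\alpha \in \pi_1(UT\Sigma_g)$, the mapping class $\rho(\alpha) = h(f(\alpha)) \in \Mod_{2g}$ acts trivially on $H_1(\Sigma_{2g},\Z)$. Writing $f(\alpha) = (\phi_1,\phi_2)$ with $\phi_1 = \Push(\alpha)$ and $\phi_2 = F^{-1}\phi_1 F$, I will first show that each $\phi_i$ acts trivially on $H_1(\Sigma_g^1,\Z)$, and then deduce the statement on $H_1(\Sigma_{2g},\Z)$ from a Mayer--Vietoris decomposition.

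For the first step, the key observation is that the action of $\Mod_g^1$ on $H_1(\Sigma_g^1,\Z)$ factors through the capping-off quotient $\Mod_g^1 \to \Mod_g$. Indeed, the inclusion $\Sigma_g^1 \hookrightarrow \Sigma_g$ induces an isomorphism on $H_1$ that is equivariant with respect to the capping-off homomorphism, so the symplectic representation of $\Mod_g^1$ factors through $\Mod_g$. The Birman exact sequence identifies $\Push(\pi_1(UT\Sigma_g))$ with the kernel of $\Mod_g^1 \to \Mod_g$, hence every element of $\Push(\pi_1(UT\Sigma_g))$ acts trivially on $H_1(\Sigma_g^1,\Z)$. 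This handles $\phi_1$; for $\phi_2 = F^{-1}\phi_1 F$, its induced map on $H_1$ is $F_*^{-1}(\phi_1)_* F_* = F_*^{-1} \cdot \id \cdot F_* = \id$, regardless of what $F_*$ is.

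For the second step, apply Mayer--Vietoris to the decomposition $\Sigma_{2g} = \Sigma_g^1 \cup_\eta \Sigma_g^1$. Since $\eta = \partial \Sigma_g^1$ is null-homologous in each half, the map $H_1(\eta;\Z) \to H_1(\Sigma_g^1;\Z)^{\oplus 2}$ vanishes, yielding a natural direct-sum decomposition $H_1(\Sigma_{2g},\Z) \cong H_1(\Sigma_g^1,\Z) \oplus H_1(\Sigma_g^1,\Z)$. Because $h(\phi_1,\phi_2)$ acts as $\phi_i$ on the $i$-th copy of $\Sigma_g^1$ while fixing $\eta$ pointwise, it preserves this decomposition and acts as $(\phi_i)_* = \id$ on each summand. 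Hence $\rho(\alpha) \in \mathcal{I}_{2g}$, as desired.

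I expect the only subtle point to be articulating the factorization of the $\Mod_g^1$-symplectic representation through $\Mod_g$; this is a standard fact but it is the critical observation that converts the Birman exact sequence directly into the needed Torelli statement. The remaining steps are routine linear algebra and Mayer--Vietoris.
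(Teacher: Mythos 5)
Your proof is correct. It rests on the same skeleton as the paper's argument --- split $H_1(\Sigma_{2g},\Z)$ into the contributions of the two copies of $\Sigma_g^1$, show the first factor of $f(\alpha)$ acts trivially on $H_1(\Sigma_g^1,\Z)$, and dispose of the second factor by noting that conjugation by $F$ does not affect triviality of the homological action --- but you justify the key step differently. The paper works with an explicit generating set $\{[\alpha_1],\dots,[\beta_g],[F(\alpha_1)],\dots,[F(\beta_g)]\}$ of $H_1(\Sigma_{2g})$ and quotes the classical fact (Farb--Margalit, Section 6.5.2) that a boundary-push takes each simple closed curve to a homologous one; you instead observe that the symplectic representation of $\Mod_g^1$ factors through the capping homomorphism $\Mod_g^1 \to \Mod_g$ (since capping a disk induces an equivariant isomorphism on $H_1$), and that the Birman exact sequence identifies $\Push(\pi_1(UT\Sigma_g))$ with the kernel of capping, so every push map is automatically homologically trivial on $\Sigma_g^1$. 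Your use of Mayer--Vietoris (valid because $\eta = \partial\Sigma_g^1$ is null-homologous in each half, so $H_1(\Sigma_{2g}) \cong H_1(\Sigma_g^1)\oplus H_1(\Sigma_g^1)$ naturally) replaces the paper's explicit generating set. The structural route buys you independence from any curve-by-curve bookkeeping and makes transparent exactly which standard inputs are used (Birman exact sequence plus functoriality of $H_1$ under capping); the paper's route is more hands-on and self-contained at the level of curves. Both are complete.
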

\begin{proof}

 Let $\{\alpha_1, \beta_1, \dots, \alpha_g, \beta_g\}$ be a collection of simple closed curves for which the homology classes $\{[\alpha_1], \dots, [\beta_g]\}$ comprise a generating set for $H_1(\Sigma_g^1)$. Let $F: \Sigma_g^1 \to \Sigma_g^1$ be the orientation-reversing map in the definition of $f$. We can then view $\Sigma_{2g}$ as $\Sigma_g^1 \cup_{\partial \Sigma_g^1} F(\Sigma_g^1)$. Define
\[
\mathcal B = \{\alpha_1, \dots, \beta_g, F(\alpha_1), \dots, F(\beta_g) \}.
\]

It follows that the homology classes $\{[\alpha_1], \dots, [\beta_g], [F(\alpha_1)],\dots, [F(\beta_g)]\}$ comprise a generating set for $H_1(\Sigma_{2g})$. To determine whether a mapping class $\phi \in \Mod(\Sigma_{2g})$ is contained in $\mathcal I_{2g}$, it suffices to show that the homology class of each $\alpha_i, \beta_i, F(\alpha_i), F(\beta_i)$ is preserved by $\phi$. Up to isotopy, $\eta$ is preserved by the action of $\pi_1 \Sigma_g$ via $\rho$, so it suffices to consider how $\pi_1 \Sigma_g$ acts on both copies of $\Sigma_g^1$. If $x \in \pi_1 \Sigma_g$ is given, then on $\Sigma_g^1$, the effect of $\rho(x)$ is to push the boundary component around a loop in $\Sigma_g$ in the homotopy class of $x$. As is well-known (see, for example, \cite{FM}, section 6.5.2), the curves $\eta$ and $\rho(x)(\eta)$ are homologous, for any choice of $x \in \pi_1 \Sigma_g$ and $\eta$ a simple closed curve on $\Sigma_g^1$. In particular, 
\[
[\rho(x)(\alpha_1)] = [\alpha_1], \dots, [\rho(x)(\beta_g)] = [\beta_g],
\]
where these homologies hold in $\Sigma_g^1$ and so necessarily also in $\Sigma_{2g}$. The element $x \in \pi_1 \Sigma_g$ acts on the other half of $\Sigma_{2g}$ via conjugation by $F$, and so similarly the curves $F(\alpha_1), \dots,  F(\beta_g)$ are preserved on the level of homology. As we have shown that each homology class of a generating set for $H_1(\Sigma_{2g})$ is preserved under $\im(\rho)$, it follows that $\im(\rho) \le \mathcal{I}_{2g}$ as claimed. \end{proof}

\begin{theorem}\label{theorem:monodromy}
The monodromy of any of the surface bundle structures $p_i: E \to \Sigma_g\ (i = 1,2,3, 4)$ is the map $\rho: \pi_1 \Sigma_g \to \mathcal{I}_{2g}$ described above. 
\end{theorem}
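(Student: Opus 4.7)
The plan is to identify $p_1: E \to \Sigma_g$ as a fiberwise boundary-connect-sum of two copies of the $\Sigma_g^1$-bundle $E^\pm \to \Sigma_g$ given by the vertical projection, to compute each half's monodromy via point-pushing, and then to combine them using the juxtaposition map $h$ from the definition of $\rho$. The other three fiberings will follow by essentially the same argument, with minor modifications to track orientations and the deformed gluing $h_1$.

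The first step is to compute the monodromy of $E^+ = \Sigma_g \times \Sigma_g \setminus N \to \Sigma_g$ via $p_V$. This is the classical disk-removed bundle whose fiber is $\Sigma_g^1$: after choosing a framing of the boundary circle bundle $\partial \overline{N} \cong UT(\Sigma_g) \to \Sigma_g$ along a loop $\gamma$, equivalently a lift $\tilde\gamma \in \pi_1(UT\Sigma_g)$ of $\gamma \in \pi_1 \Sigma_g$, the monodromy is $\Push(\tilde\gamma) \in \Mod_g^1$ by (the boundary version of) the Birman exact sequence. Changing the lift modifies the monodromy by the corresponding power of the boundary twist $T_\eta$.

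The second step is to handle $E^-$. Its fiber is again $\Sigma_g^1$ but with the opposite orientation inherited from the doubling construction. Transporting this fiber to the standard $\Sigma_g^1$ via the orientation-reversing diffeomorphism $F$ fixed in the definition of $f$, the monodromy becomes $F^{-1} \circ \Push(\tilde\gamma) \circ F$. Since the boundary identification in the doubling is the identity on $\partial \overline{N}$, it respects the chosen framing, and the two halves' monodromies juxtapose under $h$. Thus the monodromy of $p_1$ evaluated at $\gamma$ is $h(f(\tilde\gamma))$. Independence of the choice of lift $\tilde\gamma$ is automatic: altering $\tilde\gamma$ by the fiber class of $UT\Sigma_g$ multiplies the left factor by $T_\eta^{\pm 1}$ and the right by $F^{-1} T_\eta^{\pm 1} F = T_\eta^{\mp 1}$, and these cancel under $h$ (this is the cancellation already verified in the text, where it is shown that $h(f(\gamma)) = \id$ for the fiber class $\gamma$). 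Hence $\gamma \mapsto h(f(\tilde\gamma))$ descends to a well-defined homomorphism $\pi_1 \Sigma_g \to \Mod_{2g}$ that agrees with $\rho$ on the nose.

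For $p_2$, the identical argument applies with $p_V$ and $p_H$ interchanged throughout, and yields the same $\rho$ by the symmetry between the two factors of $\Sigma_g \times \Sigma_g$. For $p_3$ and $p_4$, the deformed gluing $h_1$ identifies the $p_V$-fibers of $E^+$ with the $p_H$-fibers of $E^-$ (this is exactly the calculation $p_V(z, z+\epsilon e^{i\theta}) = (p_H \circ h_1)(z, z+\epsilon e^{i\theta}) = z$ recorded in the construction), so the fibers are still $\Sigma_{2g}$ and each half contributes a $\Push$ term along the common base loop $\gamma$; the $F$-conjugation on $E^-$ arises from the same orientation-reversal mechanism. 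I expect the main technical obstacle to be keeping track of orientation conventions carefully enough to confirm that the $F$-conjugation appears on exactly one side in each of the four cases, which I would verify in the local coordinates $\theta_V$ and $\theta_H$ introduced in the construction, checking in each case that the framings used to pick the lift $\tilde\gamma$ on the two halves are compatibly identified by the gluing map (identity or $h_1$). Once this compatibility is established, Lemma \ref{lemma:rhotorelli} then confirms that each monodromy lies in $\mathcal{I}_{2g}$.
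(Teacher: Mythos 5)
Your proposal is correct in outline, but it distributes the work differently from the paper. For $p_1$ you and the paper do essentially the same thing: the paper argues informally that the monodromy ``drags the connecting cylinder along $\gamma$ in each half,'' while you make this precise via the boundary Birman exact sequence, identifying each half of the fiber bundle $E^\pm \to \Sigma_g$ as the disk-removed bundle with monodromy $\Push(\tilde\gamma) \in \Mod_g^1$ for a lift $\tilde\gamma \in \pi_1 UT\Sigma_g$, with the $F$-conjugation on the $E^-$ side coming from the orientation reversal, and with independence of the lift following from the cancellation $h(f(\gamma)) = \id$ on the fiber class. That is a sharper version of the paper's $p_1$ computation. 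Where you genuinely diverge is in the remaining cases: you propose to redo the geometric computation for $p_2, p_3, p_4$, using the identity $p_V = p_H \circ h_1$ on $\partial\overline N$ and a case-by-case check in the coordinates $\theta_V, \theta_H$ that the framings match and that the $F$-conjugation lands on exactly one side. The paper avoids this entirely: it computes only $p_1$ geometrically, and then observes that the factor-swapping involution $\iota$ of $\Gamma = \pi_1(\Sigma_g\times\Sigma_g\setminus N)$ satisfies $(\varpi_i)_*\circ\iota = (\varpi_{i+1})_*$ and preserves $\pi_1 UT\Sigma_g$, hence extends to an automorphism of $\pi_1 E = \Gamma *_{\pi_1 UT\Sigma_g} \Gamma$ carrying any one of the four extension structures to any other; by Dehn--Nielsen--Baer the four monodromies then coincide in $\Out(\pi_1\Sigma_{2g}) \supseteq \Mod_{2g}$. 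Your route gives a direct identification of each monodromy with the Korkmaz formula but hinges on the orientation and framing bookkeeping you explicitly defer; that deferred verification is exactly the fiddly part (note, e.g., that $h_1$ matches the boundary circles only up to a rotation by $\pi$, harmless but needing comment), and the paper's symmetry argument is precisely the device that makes it unnecessary. If you carry out the coordinate check you outline, your argument is complete; alternatively, you could graft the paper's $\Aut(\pi_1 E)$ trick onto your $p_1$ computation and skip the remaining three cases.
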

\begin{proof}
We begin by considering $p_1$. Let $x \in \pi_1\Sigma_g$ be given. The image of the monodromy representation $\mu(x) \in \Mod_{2g}$ is computed by selecting some immersed representative $\gamma$ for $x$, considering the pullback of the bundle $E \to \Sigma_g$ along the immersion map $S^1 \to \Sigma_g$ specified by $\gamma$, and determining the monodromy of this fibered 3-manifold.

The bundle $p_1: E \to \Sigma_g$ is constructed so that the fiber over $w \in \Sigma_g$ consists of two disjoint copies of $\Sigma_{g}$ connect-summed along disks centered at $w$. This means that as one traverses a loop $\gamma \subset \Sigma_g$, the effect of the monodromy is to drag the cylinder connecting the two halves along the loops in either half corresponding to $\gamma$. As a mapping class, this is exactly the map $\rho(x)$ described above.

Now let $\pi_1 E = \Gamma *_{\pi_1 UT \Sigma_g} \Gamma$ as in Remark \ref{remark:pi1again}. There is an involution $\iota: \Gamma \to \Gamma$ induced from the factor-swapping map on $\Sigma_g \times \Sigma_g \setminus \nu(\Delta)$. Let $\varpi_1, \varpi_2$ denote the vertical (resp. horizontal) projection $\Sigma_g \times \Sigma_g \setminus (\nu(\Delta)) \to \Sigma_g$. Then $(\varpi_i)_* \circ \iota = (\varpi_{i+1})_*$ for $i = 1,2$ interpreted mod $2$. As $\iota$ preserves $\pi_1 UT \Sigma_g$, it can be extended to an automorphism of either factor of $\pi_1 E = \Gamma *_{\pi_1 UT \Sigma_g} \Gamma$. In other words, the four surface-by-surface group extension structures on $\pi_1 E$ are in the same orbit of the action of $\Aut(\pi_1 E)$. Consequently, the monodromy representations $r: \pi_1 \Sigma_g \to \Out(\pi_1 \Sigma_{2g})$ are the same. As $r$ is identified with the topological monodromy representation $\rho: \pi_1 \Sigma_g \to \Mod_{2g}$ under the Dehn-Nielsen-Baer isomorphism $\Mod_{2g} \approx \Out^+(\pi_1 \Sigma_{2g})$, this shows that any of the four monodromy representations are equal. 
\end{proof}

We summarize the results of the basic construction in the following theorem.
\begin{theorem}\label{theorem:threefiberings}
For any $g \ge 2$, there exists a $4$-manifold $E$ which admits four fiberings $p_i: E \to \Sigma_g, i = 1,2,3, 4$ as a $\Sigma_{2g}$-bundle over $\Sigma_g$ that are pairwise distinct up to $\pi_1$-fiberwise diffeomorphism. For each $i$, the monodromy $\rho_i: \pi_1 \Sigma_g \to \Mod_{2g}$ of $p_i: E \to \Sigma_g$ is contained in the Torelli group $\mathcal{I}_{2g}$. 
\end{theorem}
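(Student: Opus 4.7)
The plan is to assemble this theorem directly from the constructions and results already carried out in this section, since it is essentially a consolidation of Theorem \ref{theorem:distinct}, Lemma \ref{lemma:rhotorelli}, and Theorem \ref{theorem:monodromy}. The manifold $E$ is the double $(E_1 \setminus N) \cup_{\partial \overline{N}} (E_1 \setminus N)$ with $E_1 = \Sigma_g \times \Sigma_g$, and the four maps $p_1, p_2, p_3, p_4$ are the projections built from the combinatorial choice of $p_V$ or $p_H$ on each of $E^+$ and $E^-$, made smooth across the doubling locus using the collar coordinates $\theta_V$, $\theta_H$, and the reparameterizing isotopy $h_t$. By Ehresmann's theorem each $p_i$ is a surface bundle with fiber $\Sigma_{2g}$ over base $\Sigma_g$.

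For pairwise distinctness up to $\pi_1$-fiberwise diffeomorphism, I would invoke Theorem \ref{theorem:distinct} directly: if the combinatorial types of $p_i$ and $p_j$ differ on $E^+$, then any essential non-peripheral loop $\gamma \subset F_i \cap E^+ \cong \Sigma_g^1$ lies in $\pi_1 F_i$ but projects under $(p_j)_*$ to the nontrivial class $[\gamma]\in \pi_1 \Sigma_g$, so $\pi_1 F_i \neq \pi_1 F_j$ inside $\pi_1 E$, and Proposition \ref{proposition:fiberings} upgrades this to non-$\pi_1$-fiberwise diffeomorphism. The remaining pairs differ on $E^-$ and are handled symmetrically. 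This disposes of all six pairs in $\{1,2,3,4\}$.

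For the Torelli statement, I would chain Theorem \ref{theorem:monodromy} with Lemma \ref{lemma:rhotorelli}. Theorem \ref{theorem:monodromy} identifies the monodromy of each $p_i$ with the Korkmaz-type embedding $\rho: \pi_1\Sigma_g \to \Mod_{2g}$ constructed just before Lemma \ref{lemma:rhotorelli} — here one uses that the factor-swap involution $\iota$ on $\Gamma = \pi_1(\Sigma_g\times\Sigma_g\setminus N)$ preserves $\pi_1 UT\Sigma_g$ and hence extends to automorphisms of $\pi_1 E = \Gamma *_{\pi_1 UT\Sigma_g}\Gamma$ intertwining the four group-theoretic fiberings, so that their outer monodromy representations agree, after which the Dehn--Nielsen--Baer isomorphism transports the equality from $\Out^+(\pi_1\Sigma_{2g})$ to $\Mod_{2g}$. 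Lemma \ref{lemma:rhotorelli} then certifies $\im(\rho)\subseteq \mathcal{I}_{2g}$.

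No step here is individually a serious obstacle: the genuinely delicate parts — verifying that all four combinatorial choices of $p_V$/$p_H$ glue to smooth fibrations, and verifying that the common monodromy $\rho$ acts trivially on $H_1(\Sigma_{2g})$ — have already been confronted earlier in this section. The proof of Theorem \ref{theorem:threefiberings} is thus simply the observation that the three preceding results combine to give exactly the claimed statement.
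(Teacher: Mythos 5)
Your proposal is correct and matches the paper exactly: the paper gives no separate argument for Theorem \ref{theorem:threefiberings}, presenting it explicitly as a summary of the basic construction, with distinctness supplied by Theorem \ref{theorem:distinct} and the Torelli claim by Theorem \ref{theorem:monodromy} combined with Lemma \ref{lemma:rhotorelli}. Your assembly of these ingredients is precisely the intended proof.
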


\bigskip

\para{Surface bundles over surfaces with $n$ distinct fiberings} We next extend the construction given in the previous subsection to yield examples of surface bundles over surfaces with $n$ distinct (up to $\pi_1$-fiberwise diffeomorphism) fiberings for arbitrary $n$. Let $X$ be a connected bipartite graph with vertex set $V(X)$ and edge set $E(X)$ of cardinalities $C, D$ respectively. As $X$ is bipartite, it admits a coloring $c: V(X) \to \{+,-\}$ in such a way that if $v$ is colored with $\pm$, then all the vertices $w$ adjacent to $v$ are colored $\mp$. Consequently we define $\delta^\pm: E(X) \to V(X)$ be the map which sends $e$ to the vertex $v \in e$ colored $\pm$. 

Let $G$ be a finite group with $\abs{G} = n$, where $n$ is an integer such that every $v \in V(X)$ has valence at most $n$. Assign labelings $g^\pm: E(X) \to G$ to the half-edges of $X$, subject to the restriction that $g^\pm$ is an injection when restricted to 
\[
\{e \in E(X) \mid \delta^\pm(e) = v\}
\]
for any $v \in V(X)$. In other words, the set of half-edges adjacent to any vertex must have distinct labelings. See Figure \ref{figure:graph}.

\begin{center}
\begin{figure}
\includegraphics{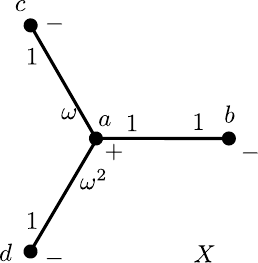}
\caption{An example of a graph $X$ equipped with a labeling of the half-edges by elements of $G = \Z / 3 \approx \{1, \omega, \omega^2\}$ the group of third roots of unity.}
\label{figure:graph}
\end{figure}
\end{center}

Let $\Sigma$ be a surface admitting a free action of $G$, such as the one depicted in Figure \ref{figure:surface}. For each $v \in V(X)$, consider the $4$-manifold $E_1^v = \Sigma\times \Sigma$, oriented so that the orientations on $E_1^v$ and $E_1^w$ disagree whenever $c(v) \ne c(w)$. Each $E_1^v$ admits two projections $p^{v,1}, p^{v,2}: E_1^v \to \Sigma_g$ onto the first (resp. second) factor.

For $x \in G$, let 
\[
\Delta^x = \{(w, x\cdot w) \mid w \in \Sigma\} \subset \Sigma \times \Sigma
\]
be the graph of $x: \Sigma \to \Sigma$. By abuse of notation we can view $\Delta^x$ as embedded in any of the $E_1^v$. Let $\Delta$ be the disconnected surface embedded in $E_1 = \bigcup_{v \in V(X)} E_1^v$ for which 
\[
\Delta \cap E_1^v = \bigcup_{v \in e} \Delta^{g^{c(v)}(e)}.
\]
Let $N$ denote the $\epsilon$-neighborhood of $\Delta$. There is a decomposition
\[
N = \bigcup_{e \in E(X)} N^e
\]
and a further decomposition
\[
N^e = N^{e, +} \cup N^{e, -} \qquad \mbox{ with }\qquad N^{e, \pm} \subset E_1^{\delta^\pm(e)}.
\]
Each $N^{e, \pm}$ is the $\epsilon$-neighborhood of a single component of $\Delta$. 

\begin{center}
\begin{figure}
\includegraphics{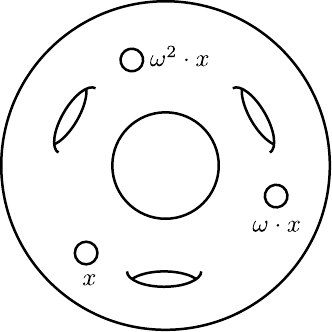}
\caption{A surface $\Sigma$ admitting a free action of $G = \{1, \omega, \omega^2\}$. With respect to the labeling in Figure \ref{figure:graph}, the fiber of $E_2^a$ over $x \in \Sigma$ has neighborhoods of $x, \omega\cdot x,$ and $\omega^2 \cdot x$ removed. }
\label{figure:surface}
\end{figure}
\end{center}

Define
\[
E_2 = E_1 \setminus \operatorname{int}(N)
\]
and, for $v \in V(X)$,
\[
E_2^v = E_2 \cap E_1^v.
\]
The orientation convention ensures that for each $e \in E$, the Euler numbers of the disk bundles $N^{e, \pm}$ are given by $\pm \chi(\Sigma)$. Their boundaries can therefore be identified via an orientation-reversing diffeomorphism. As in the previous construction, it  will be convenient to specify the gluing maps only up to isotopy, and as before we will take the isotopy class of the identity. 

With these conventions in place, we define the (connected oriented) $4$-manifold
\[
E_X = \bigcup_{v \in V(X)} E_2^v
\]
glued together as prescribed by the labeled graph $X$ with all identifications of boundary components in the isotopy class of the identity.
Figure \ref{figure:surface} depicts a portion of the fiber of $E_X$ for the graph $X$ of Figure \ref{figure:graph}. Figure \ref{figure:totspc} depicts the total space of $E_X$. The portion of the fiber shown in Figure \ref{figure:surface} is the portion contained in the central component of Figure \ref{figure:totspc}.

\begin{center}
\begin{figure}
\includegraphics{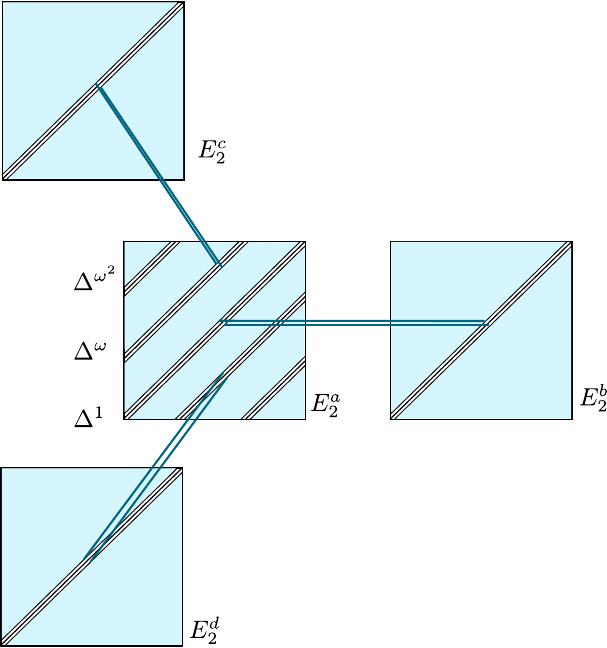}
\caption{A schematic rendering of the $4$-manifold $E_X$ associated to the graph $X$ of Figure \ref{figure:graph} and the surface $\Sigma$ of Figure \ref{figure:surface}. The lines connecting the components indicate how the various $\tilde N^{e}$ are attached.}
\label{figure:totspc}
\end{figure}
\end{center}

\begin{theorem}\label{theorem:nfiberings}
Let $X$ be a finite bipartite graph, possibly with multiple edges, with vertex set $V(X)$ and edge set $E(X)$ of cardinalities $C,D$ respectively. Then,
\begin{packed_enum}
\item The manifold $E_X$ constructed above admits $2^C$ fiberings $p^f: E \to \Sigma$ as a surface bundle over a surface, indexed by the set of maps $f: V(X) \to \{1,2\}$. The fiberings are pairwise-inequivalent up to $\pi_1$-fiberwise diffeomorphism. 
\item  \label{item:fiber}The fiber of any of the fiberings is a surface of the form $\tilde \Sigma = \Sigma^{\# C} \# \Sigma_{1 - C + D}$.
\item The total space $E_X$ has the structure of a graph of groups modeled on $X$ where the vertex groups are free-by-surface group extensions $\Gamma$ and the edge groups are given by $\pi_1 UT \Sigma$ (with notation as in Remark \ref{remark:pi1again}).
\end{packed_enum}
 \end{theorem}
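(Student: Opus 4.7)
The plan is to adapt the vertex-by-vertex arguments of Theorems \ref{theorem:distinct} and \ref{theorem:threefiberings} to the labeled-graph setting. For part (1), given $f: V(X) \to \{1,2\}$ I would define $p^f$ by declaring $p^f|_{E_2^v} = p^{v,f(v)}$ for each $v \in V(X)$. The first task is to show that these local pieces assemble into a well-defined smooth map to $\Sigma$. I would analyze each edge $e$ locally in the translation-surface coordinates: when $f(\delta^+(e)) = f(\delta^-(e))$, both sides of the gluing use the same coordinate projection and the identity identification is smooth, exactly as for $p_1,p_2$ in the basic construction; when the values disagree, I would modify the gluing by the isotopy $h_1$ used for $p_3,p_4$. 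Because the boundary identifications were only specified up to isotopy, this modification yields the same $4$-manifold, and the collar formulas $\theta_V,\theta_H$ translate verbatim to show $p^f$ is a proper surjective submersion. Ehresmann's theorem then delivers the bundle structure.

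To prove pairwise distinctness up to $\pi_1$-fiberwise diffeomorphism, I would apply Proposition \ref{proposition:fiberings} and mimic the proof of Theorem \ref{theorem:distinct}: if $f \ne f'$ differ at some vertex $v$, select a non-peripheral essential loop $\gamma$ in $F_f \cap E_2^v$, which is homeomorphic to $\Sigma$ with $\deg(v)$ open disks removed. Then $[\gamma] \in \pi_1 F_f$ is nontrivial, while $p^{f'}(\gamma)$ is the image of $\gamma$ under the opposite coordinate projection on $E_2^v$, which remains a non-peripheral loop in $\Sigma$ and hence is nontrivial in $\pi_1 \Sigma$. Thus $\pi_1 F_f \ne \pi_1 F_{f'}$ as required.

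For part (2), the fiber of $p^f$ over a generic $w \in \Sigma$ meets each $E_2^v$ in a copy of $\Sigma$ with $\deg(v)$ open disks removed, and since $X$ is connected the $C$ such pieces glue along $D$ pairs of boundary circles to form a connected closed surface $\tilde\Sigma$ with
\[
\chi(\tilde\Sigma) \;=\; C\,\chi(\Sigma) - 2D.
\]
To match the claimed description, I would choose a spanning tree $T \subset X$: its $C-1$ edges assemble the $C$ copies of $\Sigma$ via ordinary connect sum into $\Sigma^{\#C}$, while each of the remaining $D - C + 1$ edges attaches an extra handle, contributing the additional $\Sigma_{1-C+D}$ summand. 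For part (3), I would run Seifert--van Kampen on the cover $\{E_2^v\}_{v \in V(X)}$: the vertex group $\pi_1 E_2^v$ fits into the short exact sequence
\[
1 \to \pi_1\bigl(\Sigma \setminus \{\deg(v)\text{ points}\}\bigr) \to \pi_1 E_2^v \to \pi_1 \Sigma \to 1
\]
induced by either coordinate projection, exhibiting it as a free-by-surface extension $\Gamma$, while the edge groups are the fundamental groups of the boundary tori $\partial N^e$, each of which is the unit circle bundle of a section of Euler number $\pm\chi(\Sigma)$ and so diffeomorphic to $UT\Sigma$.

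The step I expect to require the most care is the construction of the $2^C$ fiberings themselves: each choice of $f$ demands its own set of isotopy modifications to the gluings, producing a priori distinct models of $E_X$. The saving grace, as in the basic construction, is that the modifying isotopies $h_t$ are supported in disjoint collar neighborhoods of the boundary tori $\partial N^e$, so all these models are diffeomorphic in a compatible way and each $p^f$ transports to a smooth submersion on one common manifold $E_X$.
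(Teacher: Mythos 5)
Your argument follows the paper's proof of this theorem essentially step for step on item (1): define $p^f$ vertex-wise by $p^{v,f(v)}$, glue $\partial(N^{e,+})$ to $\partial(N^{e,-})$ by the identity when $f(\delta^+(e)) = f(\delta^-(e))$ and by $h_1$ otherwise, use the collars $\theta_V, \theta_H$ and Ehresmann for smoothness, and get distinctness from Proposition \ref{proposition:fiberings} exactly as in Theorem \ref{theorem:distinct}; your closing remark that the isotopy modifications are supported near disjoint boundary circle bundles, so all models are one manifold, is the same point the paper handles via the diffeomorphism $f$ of the basic construction. Where you go beyond the paper is item (2): the paper's proof does not spell out the fiber computation, while your Euler characteristic count $\chi(\tilde\Sigma) = C\chi(\Sigma) - 2D$ together with the spanning-tree bookkeeping ($C-1$ tree edges give the connected sum $\Sigma^{\# C}$, the remaining $D-C+1$ edges each add a handle) is correct and a welcome addition. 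For item (3) you use Seifert--van Kampen to identify $\pi_1 E_X$ as a graph of groups, whereas the paper instead verifies that each $E_2^v$ and each $\partial N^{e,\pm} \cong UT\Sigma$ is aspherical, so that $E_X$ is literally a graph of $K(\pi,1)$ spaces; your route captures the group-theoretic content, but note that the vertex and edge spaces being aspherical is part of what the paper means by the total space carrying this structure. Two small slips to fix: the edge spaces $\partial N^e$ are not tori but circle bundles over $\Sigma$ (you say the right thing immediately after, so this is only terminology); and in the distinctness step, when $\deg(v) \ge 2$ an essential non-peripheral loop in $\Sigma$ minus $\deg(v)$ disks need not stay essential in $\Sigma$ (e.g.\ a loop enclosing two of the removed disks), so you should choose $\gamma$ explicitly, say a loop representing a generator of $\pi_1\Sigma$ supported away from the removed disks.
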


\begin{proof}
Given $f: V(X) \to \{1,2\}$, define $p^f$ on each component $E_2^v$ via $p^{v,f(v)}$. To realize $p^f$ as a smooth map, it is necessary to specify gluing maps identifying the various components of $E_2$, as well as appropriate collar neighborhoods. We proceed exactly as in Theorem \ref{theorem:threefiberings}. For each $x \in G$, there is an identification of (neighborhoods of) $\Delta^x$ with $\Delta^1$ via the action of the diffeomorphism $\id \times x^{-1}$ of $\Sigma \times \Sigma$. Relative to these identifications, we will speak of identifying $\partial(N^{e, +})$ and $\partial(N^{e, -})$ via $\id$ or by $h_1$ as in Theorem \ref{theorem:threefiberings}. Likewise, we will speak of the collar neighborhoods $\theta_1$ and $\theta_2$ of $\partial(N^{e, \pm})$ (referred to as $\theta_V$ and $\theta_H$ respectively in Theorem \ref{theorem:threefiberings}). 

The identifications are indexed via $E(X)$. As in Theorem \ref{theorem:threefiberings}, identify $\partial(N^{e, +})$ and $\partial(N^{e, -})$ via $\id$ if $f(\delta^+(e)) = f(\delta^-(e))$ and via $h_1$ otherwise. Then a collar neighborhood of $\partial(N^{e, \pm})$ for which $p^f$ is smooth is given by $\theta_{f(\delta^{\pm}(e))}$. 

The argument that each of the fiberings are distinct up to $\pi_1$-fiberwise diffeomorphism proceeds along the same lines as in Theorem \ref{theorem:distinct}. If $f_1, f_2: V(X) \to \{1,2\}$ are distinct, then there exists at least one $v$ for which $f_1(v) \ne f_2(v)$. Arguing as in Theorem \ref{theorem:distinct}, one produces an essential loop $\gamma \subset E_2^v$ contained in the fiber of $f_1$ that projects onto an essential loop under $f_2$. 

By definition, a graph of groups on a graph $X$ is constructed by connecting Eilenberg-Mac Lane spaces $K(\Gamma_v, 1)$ indexed by the vertices, along mapping cylinders induced from homomorphisms $\phi_e: \Gamma_e \to \Gamma_v$. In our setting, for each $v \in V(X)$, the space $E_2^v$ is a $K(\pi_1 E_2^v,1)$ space, since it is the total space of a fibration $\Sigma' \to E_2^v \to \Sigma$, where $\Sigma'$ is obtained from $\Sigma$ by removing $n$ open disks, one for each edge incident to $v$. As the base and the fiber of this fibration are both aspherical, it follows from the homotopy long exact sequence that $E_2^v$ is aspherical as well. The edge spaces are given by $\partial(N^{e, \pm})$, each of which is diffeomorphic to the aspherical space $UT \Sigma$. It follows that $E_X$ is indeed a graph of groups. \end{proof}

\begin{remark}
In contrast with the construction in Theorem \ref{theorem:threefiberings}, the monodromy representations associated to an arbitrary $E_X$ need not be contained in the Torelli group. For example, let $X$ be a graph with two vertices and two edges connecting them. We can take $\Sigma$ to be a surface of genus $3$. Then it is easy to find elements of the monodromy that do not preserve the homology of the fiber. See Figure \ref{figure:monodromy}.

It can also be seen from this point of view that the images of the monodromy representations will be contained in the {\em Lagrangian mapping class group} $\mathcal{L}_g$, defined as follows. The algebraic intersection pairing endows $H_1(\Sigma_g, \Z)$ with a symplectic structure, and there is a decomposition
\[
H_1(\Sigma_g, \Z) = L_x \oplus L_y
\]
as a direct sum, with the property that the algebraic intersection pairing restricts trivially to $L_x$ and to $L_y$. Then 
\[
\mathcal{L}_g := \{f \in \Mod_g \mid f(L_x) = L_x\}.
\]

Suppose $\tilde \Sigma$ has been constructed from a finite graph $X$ as in Theorem \ref{theorem:nfiberings}. Let $\rho:\pi_1 \Sigma \to \Mod(\tilde \Sigma)$ be the associated monodromy. There is a Lagrangian subspace of $H_1(\tilde \Sigma)$ of the form
\[
L = \bigoplus_{v \in V(X)}L_v \oplus \mathcal C,
\]
where $L_v$ is a Lagrangian subspace of the fiber of $E_2^v$, and $\mathcal C \le H_1(\tilde \Sigma)$ is the (possibly empty) subspace generated by the homology classes of the former boundary components in $\tilde \Sigma$. By construction, for all $x \in L_v$ and all $g \in \pi_1(\Sigma)$, the equation
\[
\rho(g)(x)= x + c
\] 
holds in $H_1(\tilde \Sigma)$, for some appropriate $c \in \mathcal C$. As $\mathcal C$ is fixed elementwise by the action of $\rho$, it follows that $L$ is indeed a $\rho$-invariant Lagrangian subspace.

In \cite{sakasai}, Sakasai showed that the first MMM class $e_1 \in H^2(\Mod_g, \Z)$ vanishes when restricted to $\mathcal{L}_g$. It follows that the surface bundles over surfaces constructed in this section all have signature zero. More generally, suppose $\Sigma_g \to E \to \Sigma_h$ is a surface bundle over a surface with monodromy representation $\rho: \pi_1 \Sigma_h \to \Gamma$, where $\Gamma \le \Mod_g$ is a subgroup. We can view the bundle $E \to \Sigma_h$ as giving rise to a homology class $[E] \in H_2(\Gamma, \Z)$, e.g. by taking the pushforward $\rho_*([\Sigma_h])$ of the fundamental class. 
\end{remark}
\begin{center}
\begin{figure}
\includegraphics[scale=0.8]{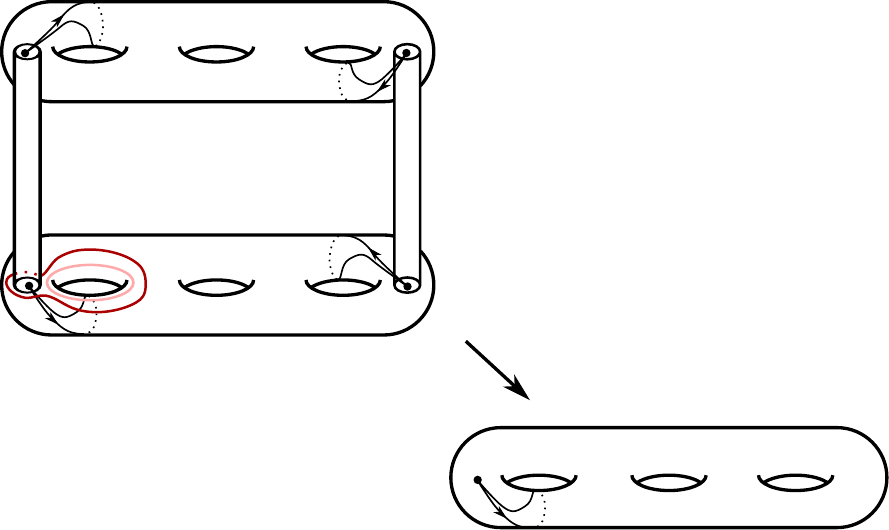}
\caption{The lighter curve is taken to the darker one under the monodromy action associated to the loop on the base surface. The dark and the light curves are not homologous. The identifications of the boundary components have been indicated by cylinders.}
\label{figure:monodromy}
\end{figure}
\end{center}
\begin{question}
Do the examples of surface bundles over surfaces given in Theorem \ref{theorem:nfiberings} determine nonzero classes in $\mathcal{L}_g$? For a fixed $g$, what is the dimension of the space spanned in $H_2(\mathcal{L}_g, \Q)$ by the examples in Theorem \ref{theorem:nfiberings} with fiber genus $g$?
\end{question}

\para{Further constructions} It is possible to extend the constructions in Theorem \ref{theorem:threefiberings} and Theorem \ref{theorem:nfiberings} to obtain examples where the base and fibers of distinct fiberings do not all have the same genus. The author is grateful to D. Margalit for suggesting the basic idea underlying the constructions in this subsection. 

 \begin{theorem}\label{theorem:fibervariant}
 Let $\Sigma$ be a surface admitting a free action by a finite group $G$ of order $n$, let $X$ be a connected bipartite graph of maximal valence $n$, and let $f^v: \tilde \Sigma \to \Sigma^v$ for $v \in V(X)$ be covering maps, not necessarily distinct. Then there exists a $4$-manifold $E_X$ admitting $\abs{V(X)} + 1$ fiberings $p^0, p^v (v \in V(X))$, with $p^0: E_X \to \Sigma$ and $p^v: E_X \to \Sigma^v$ all projection maps for surface bundle structures on $E$, distinct up to $\pi_1$-fiberwise diffeomorphism. If the surfaces $\Sigma^v$ and $\Sigma^w$ have distinct genera, the fiberings $p^v, p^w$ are distinct up to fiberwise diffeomorphism.
 \end{theorem}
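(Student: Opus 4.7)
The plan is to mirror the construction of Theorem~\ref{theorem:nfiberings} while upgrading the local data at each vertex so that a second, ``twisted'' projection targeting $\Sigma^v$ becomes available. Concretely, at vertex $v \in V(X)$ I would replace the piece $E_1^v = \Sigma \times \Sigma$ with $E_1^v = \Sigma \times \tilde{\Sigma}$, giving two natural projections at $v$: the first-coordinate projection $\pi_\Sigma \colon E_1^v \to \Sigma$ (used to build $p^0$) and the composition $f^v \circ \pi_{\tilde{\Sigma}} \colon E_1^v \to \Sigma^v$ (used to build $p^v$). The underlying $4$-manifold $E_X$ is then assembled exactly as in Theorem~\ref{theorem:nfiberings}: excise $\epsilon$-neighborhoods of a disjoint $G$-indexed family of ``diagonal'' surfaces in each $E_1^v$, and glue the resulting boundaries along the edges of $X$.

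To construct the $G$-indexed diagonals in $\Sigma \times \tilde{\Sigma}$, I would use the free $G$-action on $\Sigma$ together with a compatible lift to $\tilde{\Sigma}$ (arranging, if necessary by passing to a common cover dominating $\Sigma \to \Sigma/G$, that $\tilde{\Sigma}$ carries a $G$-action lifting that on $\Sigma$). Then setting $\Delta^x = \{(w, \tilde{\sigma}(x \cdot w)) : w \in \Sigma\}$ for $x \in G$, where $\tilde{\sigma} \colon \Sigma \to \tilde{\Sigma}$ is a chosen section-like lift, produces a disjoint family of sections of $\pi_\Sigma$. Their normal bundles are disk bundles of Euler number $\pm\chi(\Sigma)$, so the edge gluings proceed exactly as in Theorem~\ref{theorem:nfiberings}, with the isotopy $h_1$ of Theorem~\ref{theorem:threefiberings} inserted along edges at which adjacent vertices employ different local projections.

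With $E_X$ in hand, $p^0$ is defined by applying $\pi_\Sigma$ at every vertex, while for each $v \in V(X)$ the fibering $p^v$ is defined by applying $f^v \circ \pi_{\tilde{\Sigma}}$ at vertex $v$ and $\pi_\Sigma$ elsewhere, with edges incident to $v$ glued using $h_1$ so that $p^v$ is smooth across them. Ehresmann's theorem then realizes each of the resulting $|V(X)|+1$ maps as a surface bundle. Pairwise distinctness follows from Proposition~\ref{proposition:fiberings} via the same loop-chasing strategy as Theorem~\ref{theorem:distinct}: for any two fiberings one identifies a vertex at which their local projections differ and exhibits a non-peripheral loop in that vertex piece which lies in one fiber but maps essentially under the other. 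When the target surfaces have distinct genera, distinctness is of course immediate.

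The main obstacle will be reconciling the different target surfaces along edges so that $p^v$ is a single well-defined map into $\Sigma^v$. At vertices $w \neq v$, the local projection $\pi_\Sigma$ lands in $\Sigma$ rather than $\Sigma^v$, so for $p^v$ to assemble globally one must identify the boundary $UT$-bundles across edges via the sheets of $f^v$, effectively routing the projection on the $w$-piece through the cover. Verifying that this routing is consistent across all edges of $X$ simultaneously, and compatible with the bipartite structure and the labels $g^\pm$ from Theorem~\ref{theorem:nfiberings}, is the core technical content of the argument and the point at which the covering data $f^v$ genuinely enters.
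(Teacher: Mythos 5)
Your template (products at the vertices, excising neighborhoods of graph-like surfaces, gluing along the edges of $X$, then invoking Ehresmann and Proposition~\ref{proposition:fiberings} with the loop argument of Theorem~\ref{theorem:distinct}) is the right one, but the proposal breaks down at exactly the point you flag as ``the core technical content,'' and that point is the actual new ingredient of the theorem, so leaving it unresolved is a genuine gap. You define $p^v$ by $f^v\circ\pi_{\tilde\Sigma}$ at the vertex $v$ and by $\pi_\Sigma$ at every other vertex, and then hope to reconcile the fact that $\pi_\Sigma$ lands in $\Sigma$ rather than $\Sigma^v$ by ``routing through the cover'' via the boundary identifications. No choice of gluing along the edge $UT$-bundles can change the target of the map on the pieces $E_2^w$ with $w\ne v$; this is not a gluing problem at all. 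The resolution in the paper is simply to post-compose: on every piece $E_2^w$ with $w\ne v$ one defines $p^v$ as $f^v\circ p_V$ (vertical projection to $\Sigma$ followed by the covering $f^v:\Sigma\to\Sigma^v$), and on $E_2^v$ as the honest second-factor projection $p_H^v$. With that definition the fiber over $z\in\Sigma^v$ meets $E_2^v$ in one copy of $\Sigma$ with $\deg f^v$ disks removed and meets each other piece in $\deg f^v$ punctured surfaces, the boundary identifications only need to match $p_V$-fibers with the appropriate components of $p_H^v$-fibers, and the identifications required by the different fiberings are mutually isotopic via the same local straight-line isotopy $h_t$ as in Theorem~\ref{theorem:threefiberings}.

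There is also a problem with your local model. The paper's vertex piece is $\Sigma\times\Sigma^v$ (common cover times the small surface), from which one removes neighborhoods of the graphs of $g\circ f^v:\Sigma\to\Sigma^v$ with $g$ ranging over a free action; each such graph is a copy of $\Sigma$ of normal Euler number $\pm\chi(\Sigma)$, so every boundary component is a copy of $UT\Sigma$ and any two can be glued across an edge after the orientation convention. Your model $\Sigma\times\tilde\Sigma$ with diagonals $\Delta^x=\{(w,\tilde\sigma(x\cdot w))\}$ needs a section-like map $\tilde\sigma:\Sigma\to\tilde\Sigma$, which does not exist when $\tilde\Sigma\to\Sigma$ is a connected covering of degree greater than one; and if you replace it by the graph of the covering itself, that graph is a copy of $\tilde\Sigma$ with normal Euler number $\chi(\tilde\Sigma)\ne\chi(\Sigma)$, so the Euler numbers of the excised disk bundles no longer agree up to sign and the edge gluings fail. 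Both defects disappear once you adopt the paper's choice of pieces and the composition $f^v\circ p_V$ away from $v$; as written, however, the proposal does not produce a well-defined bundle map to $\Sigma^v$.
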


\begin{proof} Let $\Sigma^0$ be a closed surface of genus $g$ that admits coverings $f^1: \Sigma^0 \to \Sigma^1$ and $f^2: \Sigma^0 \to \Sigma^2$ of degree $d_1, d_2$ respectively. For $i = 1,2$, consider the graphs $\Gamma_i \subset \Sigma^0 \times \Sigma^i$ of the coverings $f^i$. Thicken these to tubular neighborhoods $N^i$. Each $\partial N^i$ is an $S^1$-bundle over $\Sigma^0$ with Euler number $\chi(\Sigma^0)$. By reversing the orientation on one of the components, it is therefore possible to fiberwise connect-sum $\Sigma^0 \times \Sigma^1$ and $\Sigma^0 \times \Sigma^2$ along $N^1$ and $N^2$ to make the $4$-manifold $E$.

Let $p_V: E_2 \to \Sigma^0$ and $p_H^i: E_2^i \to \Sigma^i$ be the vertical and horizontal projections. These can be combined in various ways to define three distinct fiberings on $E$. The first fibering $p_0: E \to \Sigma^0$ is  given by the projection onto the first factor on both coordinates of $E_2$, so that the fiber is $\Sigma^1 \# \Sigma^2$. The second fibering $p_1: E \to \Sigma^1$ is given by $p_H^1$ on $E_2^1$, and by $f^1 \circ p_V$ on $E_2^2$. Let $F_1$ denote the fiber of $p_1$ over $w \in \Sigma^1$. Then (relative to an appropriate metric $d$ and a suitable $\epsilon > 0$)
 \[
 F_1 \cap E_2^1 = \{(y,w) \in \Sigma^0 \times \Sigma^1 \mid d(f^1(y), w) \ge \epsilon\}
 \]
 is a copy of $\Sigma^0$ with $d_1$ disks removed (recall that $d_i$ is the degree of the covering $f^i: \Sigma^0 \to \Sigma^i$). In turn,
 \[
 F_1 \cap E_2^2 = \{(v, y) \in \Sigma^0 \times \Sigma^2 \mid f^1(v) = w, d(f^2(v),y) \ge \epsilon\}
 \]
 consists of $d_1$ copies of $\Sigma^2$, each with one boundary component. In total then,
 \[
 F_1 = \Sigma^0 \#\left(\Sigma^2 \right)^{\# d_1}.
 \]
 
 When $d_1 > 1$, the monodromy of $p_1$ is not contained in the Torelli group $\mathcal I_{g}$. Let $\gamma$ be a loop on $\Sigma^1$ which lifts to an arc $\tilde \gamma \subset \Sigma^0$ with endpoints $v_1, v_2$. Then the component of $F_1 \cap E_2^2$ lying over $v_1 \in \Sigma^0$ is sent to the component lying over $v_2$. If $x$ is a loop in the first component representing some nontrivial homology class in $F_1$, then $\rho(\gamma)(x)$ is a distinct homology class in $F_1$, and so the monodromy of $p_1$ has a nontrivial action on $H_1(\Sigma_g, \Z)$.

  The construction of $p_2: E \to \Sigma^2$ is completely analogous. The fibering $p_2$ is given by $f^2 \circ p_V$ on $E_2^1$ and by $p_H^2$ on $E_2^2$. The fiber is of the form
 \[
 F_2 = \Sigma^0 \# \left(\Sigma^1\right)^{\# d_2}.
 \]
 
 As in the previous constructions it is necessary to specify the precise identification maps as well as collar neighborhoods. The internal details proceed along similar lines as before, except that the boundary identifications require some further comment. Realize $\partial N^i$ as a subset of $\Sigma^0 \times \Sigma^i$. Then $\partial N^i$ is the total space of two different fiber bundle structures inherited respectively from $p_V$ and $p_H^i$. The identification maps for the various $p_i$ will be constructed so as to preserve fibers of these various fiberings. 
 
 For $p_0$, identify $\partial N^1$ and $\partial N^2$ in a fiber-preserving way with respect to $p_V$ on both $\partial N_1$ and $\partial N_2$. For $p_1$, identify $\partial N^1$ and $\partial N^2$ so that $p_H^1$-fibers on $\partial N_1$ correspond to $p_V$-fibers $\partial N_2$. More precisely, given $z \in \Sigma^1$, the $p_H^1$-fiber of $z$ consists of $d_1$ disjoint circles projecting down to circles in $\Sigma^0$ centered at the points of $(f^1)^{-1}(z)$. For every $x \in \Sigma^0$, the identification of $\partial N_1$ and $\partial N_2$ identifies $p_V^{-1}(x)$ with the component of $(p_H^1)^{-1}(f^1(x))$ centered over $x$. The identification of $\partial N^1, \partial N^2$ appropriate for $p_2$ is constructed analogously, matching $p_V$-fibers of $\partial N_1$ with $p_H^2$-fibers of $\partial N_2$. 
 
 The straight-line isotopy $h_t$ constructed in the course of Theorem \ref{theorem:threefiberings} was purely local in its definition. The same formulas as before show that the three gluing maps constructed in the above paragraph are mutually isotopic, and the construction proceeds as before.\\

 It is also possible to generalize the construction of Theorem \ref{theorem:nfiberings}, so that the surfaces used in the construction of $E_X$ are all covered by $\Sigma$. For $v \in V(X)$, let $f^v: \Sigma \to \Sigma_v$ be a covering. Suppose that each $\Sigma_v$ admits a free action of a group $G_v$, such that $\abs{G_v}$ is at least the valence of $v$. We may then repeat the construction of Theorem \ref{theorem:nfiberings}, taking $E_1^v = \Sigma \times \Sigma_v$. Since $G_v$ acts freely, for $g,h \in G_v$, the graphs of $g \circ f^v$ and $h \circ f^v$ are disjoint as submanifolds of $E_1^v$. We may then remove neighborhoods of these graphs to produce $E_2^v$ and connect the boundaries as in Theorem \ref{theorem:nfiberings}. The resulting $E_X$ has at least $\abs{V(X)} + 1$ fiberings $p_0, p_v (v \in V(X))$.  The first fibering $p_0$ is defined on each $E_2^v$ via $p_V$, and the result is a fiber bundle $p_0: E_X \to \Sigma$. For $v \in V(X)$, define $p_v$ on the components $E_2^v$ via
 \[
 \left. p_v \right |_{E_2^w} \begin{cases}
 					p_H^v		& w = v\\
					f^v \circ p_V	& w \ne v.
 \end{cases}
 \]
 The result is a fibering $p_v: E_X \to \Sigma^v$. 
\end{proof}
 
\begin{example}
Let $\Sigma$ be a surface admitting a free action of $\Z / 2^{n}$ for some $n$. For $0 \le k \le n$ define $\Sigma^k = \Sigma / (\Z/2^k)$. Let $f^k: \Sigma \to \Sigma^k$ be the associated covering. Each $\Sigma^k$ admits an action of $\Z/2^{n-k}$, so that for $k \le n-1$, each $\Sigma^k$ admits a free involution $\tau^k$. Let $X$ be the ``line graph'' with vertex set $V(X) = \{0,1, \dots, n\}$, such that $\{i,j\} \in E(X)$ whenever $\abs{i-j} = 1$.

In this setting, the construction of Theorem \ref{theorem:fibervariant} produces a $4$-manifold $E^4$ which fibers as a surface bundle over $\Sigma^k$ for each $0 \le k \le n$. In more detail, define $E_1^k = \Sigma \times \Sigma^k$. For $0 \le k \le n-1$, the graphs of $f^k$ and $\tau^k \circ f^k$ are disjoint, and we attach $E_1^k$ to $E_1^{k+1}$ by joining the graph of $\tau^k \circ f^k \subset E_1^k$ to the graph of $f^{k+1} \subset E_1^{k+1}$. Although $E_1^n$ does not necessarily admit a free involution, the vertex $n \in X$ has valence $1$, and $E_1^{n-1}$ can still be joined to $E_1^n$ using the rule described above, resulting in a $4$-manifold $E_X$.

For $0 \le k \le n$, there are fiberings $p_k: E_X \to \Sigma^k$ defined on components $E_2^j \subset E_X$ via
\[
\left. p_k \right |_{E_2^j} = \begin{cases} 	p_H^k 		& j = k\\
								f^k \circ p_V	& j \ne k
\end{cases}
\]
Together, these realize $E_X$ as the total space of a surface bundle over $\Sigma^k$ for each $0 \le k \le n$.

\end{example}

\section{Further questions}\label{section:questions}
In this final section we collect together some questions about surface bundles over surfaces with multiple fiberings. Our first line of inquiry concerns the number of possible fiberings that surface bundles over a surface with given Euler characteristic can admit. 

\begin{proposition}\label{proposition:bound}
Let $E^4$ be a $4$-manifold with $\chi(E) = 4d$. Then $E$ admits at most\footnote{In fact, an additional argument, such as the one given in section 5.2 of \cite{hillman}, can be used to obtain the slightly better bound $\sigma_0(d) d^{2d+6}$. The bound given here is good enough for our purposes.}
\[
F(d) = \sigma_0(d) (d+1)^{2d+6}
\]
fiberings as a surface bundle over a surface which are distinct up to $\pi_1$-fiberwise diffeomorphism, where $\sigma_0(d)$ denotes the number of divisors of $d$. 
\end{proposition}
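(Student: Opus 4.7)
The plan is to stratify fiberings by the ordered pair $(g,h)$ of fiber and base genus. Any fibering $\Sigma_g \to E \to \Sigma_h$ with $g, h \ge 2$ satisfies $\chi(E) = 4(g-1)(h-1) = 4d$, so $(g-1)(h-1) = d$. The ordered positive factorizations of $d$ are in bijection with the divisors of $d$, giving $\sigma_0(d)$ possible ordered pairs $(g,h) = (a+1, b+1)$ of fiber and base genera. It therefore suffices to bound the number of fiberings with any fixed $(g,h)$ by $(d+1)^{2d+6}$. Since $g-1$ and $h-1$ are positive integers with product $d$, they are extremal for the sum when $\{g-1, h-1\} = \{1, d\}$, giving $g + h \le d + 3$. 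The five-term exact sequence of the Serre spectral sequence for $\Sigma_g \to E \to \Sigma_h$ then yields the standard estimate $b_1(E) \le b_1(\Sigma_g) + b_1(\Sigma_h) = 2(g+h) \le 2d+6$.

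Fix a pair $(g,h)$. By Proposition \ref{proposition:fiberings}, counting fiberings up to $\pi_1$-fiberwise diffeomorphism reduces to counting normal subgroups $N$ of $\pi_1 E$ with $N \cong \pi_1 \Sigma_g$ and $\pi_1 E/N \cong \pi_1 \Sigma_h$. Each such $N$ is determined by an epimorphism $\phi: \pi_1 E \to \pi_1 \Sigma_h$ up to post-composition with $\Aut(\pi_1 \Sigma_h)$. Passing to abelianizations, $\phi$ determines the primitive rank-$2h$ sublattice $V_\phi := \phi^*(H^1(\Sigma_h, \Z)) \subset H^1(E, \Z)/\text{tors}$, which sits inside a free abelian group of rank at most $2d+6$. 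Crucially, the restriction of the cup product to $V_\phi$ factors through a single primitive rank-one subgroup $\Z \cdot \eta_\phi$ of $H^2(E, \Z)$, where $\eta_\phi := \phi^*([\Sigma_h]^\vee)$ is Poincar\'e dual to the fiber class $[F]$, itself a primitive class with $[F] \cdot [F] = 0$ represented by an embedded genus-$g$ surface. The plan is to show that the lattice $V_\phi$ is specified by at most $2d+6$ integer parameters, each lying in $\{0, 1, \dots, d\}$ after a suitable normalization, producing the bound $(d+1)^{2d+6}$.

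The main obstacle is the quantitative normalization step: while the qualitative finiteness of the count follows from Johnson \cite{FEA2} via the rigidity of the cup product pairing on $V_\phi$ (which must match the standard symplectic form on $H^1(\Sigma_h, \Z)$ measured against $\eta_\phi$), extracting an explicit bound requires exploiting the geometric origin of $\eta_\phi$. Concretely, following the argument in Hillman \cite{hillman} Section 5.2, one uses that $\eta_\phi$ is dual to a connected embedded minimal-genus representative $F$ of genus $g \le d+1$ with $[F] \cdot [F] = 0$; these intersection-theoretic constraints, together with the Serre-spectral-sequence control over the kernel of $H_1(E,\Z) \to H_1(\Sigma_h,\Z)$, confine any integer basis of $V_\phi$ to a bounded region of $H^1(E, \Z)/\text{tors}$. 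Combined with $b_1(E) \le 2d+6$ this yields the per-pair bound $(d+1)^{2d+6}$; multiplying by the $\sigma_0(d)$ choices of $(g,h)$ gives the claimed $F(d)$.
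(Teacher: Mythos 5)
There is a genuine gap: the step that actually produces the number $(d+1)^{2d+6}$ is never proved. Your argument correctly reduces (via Proposition \ref{proposition:fiberings}) to counting normal subgroups $N \cong \pi_1\Sigma_g$ with quotient $\pi_1\Sigma_h$, and correctly gets $\sigma_0(d)$ choices of $(g,h)$ and the generator/rank bound $2g+2h \le 2d+6$. But the per-pair bound rests entirely on the assertion that the lattice $V_\phi = \phi^*H^1(\Sigma_h,\Z)$ is ``specified by at most $2d+6$ integer parameters, each lying in $\{0,1,\dots,d\}$ after a suitable normalization.'' This is announced as a plan, and your final paragraph concedes it is ``the main obstacle'' while gesturing at intersection-theoretic constraints that are not carried out; as stated it is not even plausible, since bases of a sublattice are only well-defined up to $\GL(\Z)$-change and nothing confines the family of primitive square-zero fiber classes, or the sublattices $V_\phi$, to a box of side $d+1$. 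Moreover, even granting a finite list of possible $V_\phi$, you would still need the fibering to be recoverable from $V_\phi$: two distinct fiber subgroups could a priori induce the same rank-$2h$ sublattice of $H^1(E,\Z)$ (indeed the explicit examples in this paper have several distinct fiberings with the same base and fiber genus and identical monodromy), and this injectivity is nowhere addressed. So the proposal is an outline whose crux is missing, not a proof.

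For comparison, the paper's quantity $(d+1)^{2d+6}$ has a completely different origin: it is simply $\abs{\Hom(\pi_1E, \Z/(d+1)\Z)}$ for a group with at most $2d+6$ generators. The actual mechanism is group-theoretic rather than lattice-theoretic: one first shows that if the fiber genus is strictly less than the base genus the fibering is unique, because the image of a second fiber subgroup in $\pi_1\Sigma_h$ would be a finitely generated nontrivial normal subgroup, hence of finite index, hence requiring at least $2h$ generators, contradicting the $2g$-generation of a genus-$g$ surface group. Then, for an arbitrary fibering, one pulls back along a $(d+1)$-sheeted cover of the base (classified by an epimorphism $\pi_1\Sigma_h \to \Z/(d+1)\Z$, pulled back to $\pi_1 E$); the covering base has genus larger than $g$, so the fiber subgroup is the unique surface normal subgroup upstairs, and therefore each homomorphism $\pi_1E \to \Z/(d+1)\Z$, together with a choice of $g$, accounts for at most one fibering. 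If you want to salvage your cohomological approach you would have to supply both the missing injectivity (fibering determined by $V_\phi$, or by whatever invariant you count) and an honest quantitative confinement argument in the spirit of Hillman's Section 5.2; as written, neither is present.
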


\begin{proof}
To obtain the explicit bound given above, we will first reproduce F.E.A. Johnson's original argument, incorporating some improvements suggested by J. Hillman. Let $p: E \to \Sigma_h$ be the projection for a $\Sigma_g$-bundle structure on $E$. There is an associated short exact sequence of fundamental groups
\begin{equation}\label{eqn:ses}
1 \to K \to \pi_1 E \to \pi_1 \Sigma_h \to 1,
\end{equation}
with $K \approx \pi_1 \Sigma_g$ the fundamental group of the fiber.

We will first show that if $g < h$, then $p$ determines the unique $\Sigma_g$-bundle structure on $E$, up to $\pi_1$-fiberwise diffeomorphism. Equivalently (by Proposition \ref{proposition:fiberings}), it suffices to show that (\ref{eqn:ses}) is the unique splitting of $\pi_1 E$ as an extension of $\pi_1 \Sigma_h$ by $\pi_1 \Sigma_g$. 

Suppose $p': E \to \Sigma_h$ is a second fibering, giving rise to a short exact sequence
\[
1 \to K' \to \pi_1 E \to \Sigma_h \to 1.
\]
Consider the projection $p_*|_{K'}$. Suppose first that $p_*(K') = \{1\}$, or equivalently $K' \le \ker p_* = K$. As $K$ and $K'$ are both isomorphic to $\pi_1 \Sigma_g$, in this case $K = K'$.

Suppose next that $\im(p_*|_{K'})$ is nontrivial. In this case, the image $p_*(K')$ is a nontrivial finitely generated normal subgroup of the surface group $\pi_1 \Sigma_h$. It is a general fact that if $N \normal \pi_1 \Sigma_h$ is any nontrivial finitely-generated normal subgroup, then $N$ has finite index in $\pi_1 \Sigma_h$ (cf Theorem 3.1 of \cite{rivinfiber}). No finite-index subgroup of $\pi_1 \Sigma_h$ is generated by strictly fewer than $2h$ generators. On the other hand, $K'$ is generated by $2g$ generators by assumption. This is a contradiction, and it follows that $\im(p_*|_{K'}) = \{1\}$. By the argument of the previous paragraph, this shows that necessarily $K = K'$, and so $p: E \to \Sigma_h$ is the unique $\Sigma_g$-bundle structure on $E$ as claimed.

Returning to the general setting, suppose $p: E \to \Sigma_h$ is a $\Sigma_g$-bundle over $\Sigma_h$. As before, let $K \approx \pi_1 \Sigma_g$ denote the fundamental group of the fiber. The Euler characteristic is multiplicative for fiber bundles:
\[
\chi(E) = \chi(\Sigma_g) \chi(\Sigma_h) = 4(g-1)(h-1).
\] 
Let $d = (g-1)(h-1)$, so that $\chi(E) = 4d$. Any $d+1$-sheeted cover of $\Sigma_h$ has genus $(h-1)d + h = (h-1)^2(g-1)+h$, and this quantity is strictly larger than $g$. Let $\tilde \Sigma \to \Sigma_h$ be such a cover, and let $\tilde p: \tilde E \to \tilde \Sigma$ denote the pullback of $p$ along this cover. Then $\tilde p$ has the property that the genus of the fiber is strictly smaller than the genus of the base. By the above argument, $K$ is the unique normal subgroup of $\pi_1 \tilde E$ isomorphic to $\pi_1 \Sigma_g$ with surface group quotient. 

Let $\tilde \alpha: \pi_1 E \to \Z / (d+1) \Z$ be an epimorphism. If $\tilde \alpha(K) = 0$, then $\tilde \alpha$ is induced from a map $\alpha: \pi_1 \Sigma_h \to \Z / (d+1) \Z$. Let $\tilde \Sigma$ denote the cover of $\Sigma_h$ associated to $\alpha$. Carrying out the construction of the previous paragraph, it follows that to each such $\tilde \alpha$ there is at most one $\Sigma_g$-bundle structure on $E$. As $\chi(\Sigma_g)$ must divide $\chi(E)$, it follows that $E$ can be the total space of a $\Sigma_g$-bundle for only finitely many $g$. As $\Hom(\pi_1 E, \Z / (d+1) \Z)$ is finite, this completes the portion of the argument due to F.E.A. Johnson.

Our own extremely modest contribution to Proposition \ref{proposition:bound} is to determine an explicit upper bound on the maximal cardinality of $\Hom(\pi_1 E, \Z / (d+1) \Z)$ over all possible surface bundles $E$ of a fixed Euler characteristic $4d$. It follows from (\ref{eqn:ses}) that a surface bundle $\Sigma_g \to E \to \Sigma_h$ admits a generating set for $\pi_1 E$ of size $2g + 2h$. As $g,h$ range over all possible pairs such that $(g-1)(h-1) = d$, the largest value of $2g + 2h$ is obtained for $g = d+1, h = 2$. This shows that any surface bundle over a surface $E$ with $\chi(E) = 4d$ has a generating set with at most $2d + 6$ generators. It follows that 
\[
\abs{\Hom(\pi_1 E, \Z /(d+1)\Z)} \le (d+1)^{2d+6}.
\]
As noted above, for each $\alpha \in \Hom(\pi_1 E, \Z /(d+1)\Z)$, the corresponding cover $\tilde E$ has at most one $\Sigma_g$-bundle structure for each $g \ge 2$ such that $g -1$ divides $d$. The bound in the statement of the Proposition follows. \end{proof}

We defined the function $N(d)$ in the Introduction,
\[
N(d) := \max\left\{\begin{array}{c|c}
n 	&	\mbox{there exists $E^4, \chi(E) \le 4d$, $E$ admits $n$ surface bundle structures}\\
 	& \mbox{distinct up to $\pi_1$-fiberwise diffeomorphism.}\end{array}
\right\}
\]
Proposition \ref{proposition:bound} shows that $N(d) \le \sigma_0(d) (d+1)^{2d+6}$. Prior to the results of this paper, the best known lower bound on $N(d)$ was $N(d) \ge 2$. Drastic improvements can be made by making use of the construction of Theorem \ref{theorem:nfiberings}. Let $\Sigma$ be a surface of genus $3$ admitting a free involution $\tau$, and let $X$ be the ``line graph'' with vertex set $V(X) = \{1, 2, \dots, n\}$, such that $\{i,j\} \in E(X)$ whenever $\abs{i-j} = 1$. According to Theorem \ref{theorem:nfiberings}, the corresponding $E_X$ has $2^n$ fiberings. For each fibering, the base has genus $3$ and the fiber has genus $3n$; consequently $\chi(E_X) = 4\cdot 2 \cdot (3n-1)$. This shows that 
 \[
 N(6n-2) \ge 2^n.
 \]
 Combining this with Johnson's upper bound, we obtain
 \[
 2^{(d+2)/6} \le N(d) \le \sigma_0(d) (d+1)^{2d+6}.
 \]

\begin{problem}
Study the function $N(d)$. Sharpen the known upper bounds on $N$, and construct new examples of surface bundles over surfaces to improve the lower bounds. 
\end{problem}

One feature of the constructions given here is that they all take place within the smooth category, and cannot be given complex or algebraic structures. Indeed, all of the monodromy representations of the constructions of Section \ref{section:construction} globally fix the isotopy class of a curve contained in the fiber (one of the former boundary components). H. Shiga has shown (\cite{shiga}) that if $E$ is a $4$-manifold with a complex structure, $B$ a Riemann surface, and $p: E \to B$ a holomorphic map realizing $E$ as the total space of a holomorphic family of Riemann surfaces, then the monodromy cannot globally fix the isotopy class of any curve. On the other hand, it has been shown independently by J. Hillman, M. Kapovich, and D. Kotschick (cf. Theorem 13.7 of \cite{hillman}) that if $E$ and $B$ are as above and $p: E \to B$ is a {\em smooth} fibration of $E$ over $B$, then there exists a holomorphic map $p': E \to B$ that realizes $E$ as the total space of a holomorphic family of Riemann surfaces. Combining these results with the known reducibility of the monodromies of the examples in this paper, one sees that our examples cannot be given complex structures. On the other hand, the examples of Atiyah and Kodaira that admit two fiberings take place in the algebraic category, prompting the following.

\begin{question}
Let $E^4$ be a complex surface that is the total space of a surface bundle over a surface $p: E \to X$. Can such an $E$ admit three or more such fiberings? More generally, can a $4$-manifold with nonzero signature admit three or more structures as a surface bundle over a surface?
\end{question}

This question is closely related to a point raised briefly in the introduction, and we remark that it is possible that the list of known fiberings of a given $4$-manifold need not be exhaustive. There can be ``hidden'' fiberings that are not immediately apparent.
\begin{question}\label{question:known}
Are the two known fiberings of surface bundles over surfaces of the Atiyah-Kodaira type the only surface bundle structures on these manifolds? Do the manifolds constructed in Section \ref{section:construction} admit more fiberings than described in this paper? Is there some finite-sheeted cover of an Atiyah-Kodaira manifold that admits three or more fiberings?
\end{question}

    	\bibliography{multifiber}{}

\begin{thebibliography}{Thu86}

\bibitem[Ati69]{atiyah}
M.~F. Atiyah.
\newblock The signature of fibre-bundles.
\newblock In {\em Global {A}nalysis ({P}apers in {H}onor of {K}. {K}odaira)},
  pages 73--84. Univ. Tokyo Press, Tokyo, 1969.

\bibitem[BM12]{margalit}
R.I. Baykur and D.~Margalit.
\newblock Lefschetz fibrations and {T}orelli groups.
\newblock {P}reprint, http://arxiv.org/pdf/1210.7824.pdf, 2012.

\bibitem[BM13]{margalit2}
R.I. Baykur and D.~Margalit.
\newblock Indecomposable surface bundles over surfaces.
\newblock {\em J. Topol. Anal.}, 5(2):161--181, 2013.

\bibitem[FM12]{FM}
B.~Farb and D.~Margalit.
\newblock {\em A primer on mapping class groups}, volume~49 of {\em Princeton
  Mathematical Series}.
\newblock Princeton University Press, Princeton, NJ, 2012.

\bibitem[Hil02]{hillman}
J.~A. Hillman.
\newblock {\em Four-manifolds, geometries and knots}, volume~5 of {\em Geometry
  \& Topology Monographs}.
\newblock Geometry \& Topology Publications, Coventry, 2002.

\bibitem[Joh99]{FEA2}
F.~E.~A. Johnson.
\newblock A rigidity theorem for group extensions.
\newblock {\em Arch. Math. (Basel)}, 73(2):81--89, 1999.

\bibitem[Kod67]{kodaira}
K.~Kodaira.
\newblock A certain type of irregular algebraic surfaces.
\newblock {\em J. Analyse Math.}, 19:207--215, 1967.

\bibitem[Mor01]{moritabook}
S.~Morita.
\newblock {\em Geometry of characteristic classes}, volume 199 of {\em
  Translations of Mathematical Monographs}.
\newblock American Mathematical Society, Providence, RI, 2001.
\newblock Translated from the 1999 Japanese original, Iwanami Series in Modern
  Mathematics.

\bibitem[Neu76]{neumann}
D.~Neumann.
\newblock {$3$}-manifolds fibering over {$S^{1}$}.
\newblock {\em Proc. Amer. Math. Soc.}, 58:353--356, 1976.

\bibitem[Riv11]{rivinfiber}
I.~Rivin.
\newblock Rigidity of fibering.
\newblock {P}reprint, http://arxiv.org/pdf/1106.4595v2.pdf, 2011.

\bibitem[Sak12]{sakasai}
T.~Sakasai.
\newblock Lagrangian mapping class groups from a group homological point of
  view.
\newblock {\em Algebr. Geom. Topol.}, 12(1):267--291, 2012.

\bibitem[Sal14]{salter}
N.~Salter.
\newblock Cup products, the {J}ohnson homomorphism, and surface bundles over
  surfaces with multiple fiberings.
\newblock {P}reprint, http://arxiv.org/abs/1404.0066, 2014.

\bibitem[Shi97]{shiga}
H.~Shiga.
\newblock On monodromies of holomorphic families of {R}iemann surfaces and
  modular transformations.
\newblock {\em Math. Proc. Cambridge Philos. Soc.}, 122(3):541--549, 1997.

\bibitem[Thu86]{thurstonnorm}
W.~Thurston.
\newblock A norm for the homology of {$3$}-manifolds.
\newblock {\em Mem. Amer. Math. Soc.}, 59(339):i--vi and 99--130, 1986.

\bibitem[Tol69]{tollefson}
J.~Tollefson.
\newblock {$3$}-manifolds fibering over {$S^{1}$} with nonunique connected
  fiber.
\newblock {\em Proc. Amer. Math. Soc.}, 21:79--80, 1969.

\end{thebibliography}
	\bibliographystyle{alpha}

\end{document}